\title{\textbf{The Generating Functional of the Kontsevich Integral and its Derivation as a Holonomy}}
\author{Renaud Gauthier \footnote{rg.mathematics@gmail.com} \\ \\Lycee Albert Camus}
\theoremstyle{definition}
\newtheorem*{acknowledgments}{Acknowledgments}
\newtheorem{ceedee}{Definition}[section]
\newtheorem{conc}[ceedee]{Definition}
\newtheorem{ex1}[ceedee]{Example}
\newtheorem{prodV}[ceedee]{Lemma}
\newtheorem{desc}{Lemma}[section]
\newtheorem{holo}{Proposition}[section]
\newtheorem{labeling}{Example}[section]
\newtheorem{topG}{Lemma}[section]
\newtheorem{TP}{Lemma}[section]
\newtheorem{tildeomega}{Lemma}[section]
\newtheorem{tilomlconn}[tildeomega]{Lemma}
\newtheorem{pullbacktiloml}{Lemma}[section]
\newtheorem{poundeq}{Lemma}[section]
\newtheorem{poundeqsimple}[poundeq]{Corollary}
\newtheorem{KZeq}{Lemma}[section]
\newcommand{\tilom}{\tilde{\omega}}
\newcommand{\tiloml}{\tilde{\omega}_{loc}}
\newcommand{\beq}{\begin{equation}}
\newcommand{\eeq}{\end{equation}}
\newcommand{\ess}{\;\setlength{\unitlength}{1.3mm}
\begin{picture}(5,4)
\put(1,1.5){\oval(2,1)[l]}
\put(3,0.5){\oval(2,1)[r]}
\put(0,0){\line(1,0){3}}
\put(1,1){\line(1,0){2}}
\put(1,2){\line(1,0){3}}
\end{picture}}
\newcommand{\dlogp}{\text{dlog}(\vartriangle \!\! z[P])}
\newcommand{\logp}{\log \vartriangle \!\! z[P]}
\newcommand{\barACNC}{\overline{\mathcal{A}}(C_N \mathbb{C})}
\newcommand{\barAXN}{\overline{\mathcal{A}}(X_N)}
\newcommand{\CNC}{C_N \mathbb{C}}
\newcommand{\barA}{\overline{\mathcal{A}}}
\newcommand{\dlog}{\text{dlog}}
\begin{document}
\maketitle
\begin{abstract}
We introduce an algebra bundle of chord diagrams over the configuration space of N points in the complex plane on which we put the Knizhnik-Zamolodchikov connection. For that particular connection, the holonomy along a loop in the base is shown to be generating the Kontsevich integral for that loop's associated braid.
\end{abstract}
\newpage

\section{The Kontsevich Integral}

We choose to define a knot to be a smooth embedding of $S^1$ into $S^3$ (or $\mathbb{R}^3$). One can define an equivalence relation on the category of knots called ambient isotopy, which we will denote by $\sim$. An isotopy between embeddings $\gamma_1,\gamma_2:X \rightarrow Y$ is a homotopy $\gamma:X\times I \rightarrow Y$ through embeddings ~\cite{GS}. An ambient isotopy between two knots $\gamma_0:S^1 \rightarrow S^3$ and $\gamma_1:S^1 \rightarrow S^3$ is an isotopy $\gamma: S^1 \times I \rightarrow S^3$ through diffeomorphisms $\Gamma:S^3\times I \rightarrow S^3$ such that $\Gamma_0=id_{S^3}$ and $\gamma_t=\Gamma_t \circ \gamma_0$ for all t. A knot is a link with only one component. We choose to define a link with $q$ components to be a smooth embedding of $\coprod_q S^1$ into $S^3$ (or $\mathbb{R}^3$). The above definition for isotopy carries over to the case of links.\\

In a first time, we will be interested in isotopy classes of knots since original definitions for the concepts that we will introduce presently where made in the case of knots. To distinguish one class from another, one needs a function on knots that takes different values on different classes, but which must of course be constant on equivalence classes, and such an object is rightfully called an isotopy invariant or invariant for short. Typically an invariant is valued in some abelian group $G$, and if we denote by $\mathbb{Z}Knots$ the abelian group generated by oriented knots, we write $\Gamma: \mathbb{Z}Knots /\!\sim \;\rightarrow G$ for an invariant $\Gamma$, it is a $G$-valued functional on the set of equivalence classes of knots. We can generalize this definition to the case of links: if $\mathbb{Z}Links /\!\sim$ is the set of equivalence classes of oriented links, $G$ is an abelian group, then a link invariant $\Gamma$ will be a map $\Gamma : \mathbb{Z}Links /\!\sim \;\rightarrow G$.\\


Of those invariants, Vassiliev invariants ~\cite{V} are of particular importance. Such invariants are functionals on $\mathbb{Z}Knots /\! \sim$ that are extended to be defined on singular knots whose singularities are transversal self-intersections. One first defines a positive crossing in the image of a knot to be:\\
\setlength{\unitlength}{0.5cm}
\begin{picture}(3,4)(-12,0)
\put(0,0){\vector(1,1){2}}
\put(2,0){\line(-1,1){0.8}}
\put(0.8,1.2){\vector(-1,1){0.8}}
\end{picture}\\ \\
where the arrows indicate the orientation on the portion of the knot that is being displayed. A negative crossing is represented as:\\
\setlength{\unitlength}{0.5cm}
\begin{picture}(3,4)(-12,0)
\put(2,0){\vector(-1,1){2}}
\put(0,0){\line(1,1){0.8}}
\put(1.2,1.2){\vector(1,1){0.8}}
\end{picture}\\ \\
One represents a transversal self-intersection as follows:\\
\setlength{\unitlength}{0.5cm}
\begin{picture}(3,3)(-12,0)
\put(0,0){\vector(1,1){2}}
\put(2,0){\vector(-1,1){2}}
\put(1,1){\circle*{0.3}}
\end{picture}\\ \\
At this point one can extend invariants of knots to also be defined on singular knots, with the use of the relation:
\beq
\setlength{\unitlength}{0.5cm}
\begin{picture}(3,3)(4,0)
\put(-1,0.8){$V$}
\put(0,0){\vector(1,1){2}}
\put(2,0){\vector(-1,1){2}}
\put(1,1){\circle*{0.3}}
\put(2.5,0.8){$=$}
\put(4,0.8){$V$}
\put(5,0){\vector(1,1){2}}
\put(7,0){\line(-1,1){0.8}}
\put(5.8,1.2){\vector(-1,1){0.8}}
\put(8,0.8){$-$}
\put(9.5,0.8){$V$}
\put(12.5,0){\vector(-1,1){2}}
\put(10.5,0){\line(1,1){0.8}}
\put(11.7,1.2){\vector(1,1){0.8}}
\end{picture} \label{Vass}
\eeq
where $V$ is any link invariant. By iterating this procedure, one can extend a link invariant to be defined on knots with multiple double-crossings. If a link invariant $V$ has its extension vanishing on knots with more than $m$ self-intersections, one says that $V$ is a Vassiliev invariant of type $m$.\\
\setlength{\unitlength}{0.5cm}
\begin{picture}(3,4)(-8,0)
\put(-1,0.8){$V$}
\put(0,0.9){$($}
\put(1,0){\vector(1,1){2}}
\put(3,0){\vector(-1,1){2}}
\put(2,1){\circle*{0.3}}
\put(4,0){\vector(1,1){2}}
\put(6,0){\vector(-1,1){2}}
\put(5,1){\circle*{0.3}}
\put(7,1){\circle*{0.2}}
\put(7.5,1){\circle*{0.2}}
\put(8,1){\circle*{0.2}}
\put(9,0){\vector(1,1){2}}
\put(11,0){\vector(-1,1){2}}
\put(10,1){\circle*{0.3}}
\put(12,0.8){$)$}
\put(12.5,0.8){$=0$}
\end{picture}\\ \\
where the argument of $V$ above has more than $m$ self-intersections. Due to the local nature of the extension of knot invariants to singular knots, we see that we can generalize the definition of a Vassiliev invariant to the case of links. A Vassiliev invariant of links will be said to be of type $m$ if it evaluates to zero on any singular link with more than $m$ double-crossings (not necessarily same component intersections).\\


Among the Vassiliev invariants of finite type $m$, one has the degree $m$ part of the Kontsevich integral first introduced in ~\cite{K}. One should note that the Kontsevich integral as it was initially defined is not strictly speaking a knot invariant, but once it is corrected as we shall do below, then it becomes an honest knot invariant, and its final corrected form is a universal Vassiliev invariant in the sense that every finite type Vassiliev invariant factors through it ~\cite{K} ~\cite{DBN} ~\cite{DBN2} ~\cite{ChDu}.\\


Before introducing this integral, we define the algebra $\mathcal{A}$ ~\cite{K} in which the integral takes its values. For a singular oriented knot whose only singularities are transversal self-intersections, the preimage of each singular crossing under the embedding map defining the knot yields a pair of distinct points on $S^1$. Each singular point in the image therefore yields a pair of points on $S^1$ that are conventionally connected by a chord for book keeping purposes. A knot with $m$ singular points will yield $m$ distinct chords on $S^1$. One refers to such a circle with $m$ chords on it as a chord diagram of degree $m$, the degree being the number of chords. The support of the graph is an oriented $S^1$, and it is regarded up to orientation preserving diffeomorphisms of the circle. More generally, for a singular oriented link all of whose singularities are double-crossings, preimages of each singular crossing under the embedding map defining the link yield pairs of distinct points on possibly different circles depending on whether the double crossing was on a same component or between different components of the link. We also connect points making a pair by a chord. A link with $m$ singular points will yield $m$ chords on $\coprod S^1$. We call such a graph a chord diagram. The support is $\coprod S^1$ regarded up to orientation preserving diffeomorphism of each $S^1$.\\

One denotes by $\mathcal{D}$ the complex vector space spanned by chord diagrams with support $S^1$. There is a grading on $\mathcal{D}$ given by the number of chords featured in a diagram. If $\mathcal{D}^{(m)}$ denotes the subspace of chord diagrams of degree $m$, then one writes:
\beq
\mathcal{D}=\oplus_{m\geq 0} \mathcal{D}^{(m)}
\eeq
One quotients this space by the 4-T relation which locally looks like:\\
\setlength{\unitlength}{1cm}
\begin{picture}(1,2)(-1,-0.5)
\multiput(0,0.75)(0.2,0){5}{\line(1,0){0.1}}
\multiput(1,0.25)(0.2,0){5}{\line(1,0){0.1}}
\put(0,0.9){\vector(0,1){0.2}}
\put(1,0.9){\vector(0,1){0.2}}
\put(2,0.9){\vector(0,1){0.2}}
\linethickness{0.3mm}
\put(0,0){\line(0,1){1}}
\put(1,0){\line(0,1){1}}
\put(2,0){\line(0,1){1}}
\put(2.5,0.5){$+$}
\end{picture}
\setlength{\unitlength}{1cm}
\begin{picture}(1,2)(-3,-0.5)
\multiput(0,0.75)(0.2,0){5}{\line(1,0){0.1}}
\multiput(0,0.25)(0.2,0){10}{\line(1,0){0.1}}
\put(0,0.9){\vector(0,1){0.2}}
\put(1,0.9){\vector(0,1){0.2}}
\put(2,0.9){\vector(0,1){0.2}}
\linethickness{0.3mm}
\put(0,0){\line(0,1){1}}
\put(1,0){\line(0,1){1}}
\put(2,0){\line(0,1){1}}
\put(2.5,0.5){$=$}
\end{picture}
\setlength{\unitlength}{1cm}
\begin{picture}(1,2)(-5,-0.5)
\multiput(0,0.25)(0.2,0){5}{\line(1,0){0.1}}
\multiput(1,0.75)(0.2,0){5}{\line(1,0){0.1}}
\put(0,0.9){\vector(0,1){0.2}}
\put(1,0.9){\vector(0,1){0.2}}
\put(2,0.9){\vector(0,1){0.2}}
\linethickness{0.3mm}
\put(0,0){\line(0,1){1}}
\put(1,0){\line(0,1){1}}
\put(2,0){\line(0,1){1}}
\put(2.5,0.5){$+$}
\end{picture}
\setlength{\unitlength}{1cm}
\begin{picture}(1,2)(-7,-0.5)
\multiput(0,0.25)(0.2,0){5}{\line(1,0){0.1}}
\multiput(0,0.75)(0.2,0){10}{\line(1,0){0.1}}
\put(0,0.9){\vector(0,1){0.2}}
\put(1,0.9){\vector(0,1){0.2}}
\put(2,0.9){\vector(0,1){0.2}}
\linethickness{0.3mm}
\put(0,0){\line(0,1){1}}
\put(1,0){\line(0,1){1}}
\put(2,0){\line(0,1){1}}
\end{picture}\\ \\
where solid lines are intervals on $S^1$ on which a chord foot rests, and arrows indicate the orientation of each strand. One further quotients this space by the framing independence relation: if a chord diagram has a chord forming an arc on $S^1$ with no other chord ending in between its feet, then the chord diagram is set to zero. The resulting quotient space is the complex vector space generated by chord diagrams mod the 4-T relation and framing independence and is denoted by $\mathcal{A}$. The grading of $\mathcal{D}$ is preserved by the quotient, inducing a grading on $\mathcal{A}$:
\beq
\mathcal{A}=\oplus_{m \geq 0}\mathcal{A}^{(m)}
\eeq
where $\mathcal{A}^{(m)}$ is obtained from $\mathcal{D}^{(m)}$ upon modding out by the 4-T and the framing independence relations. All this carries over to the case of links by formally extending the 4-T relation to the case of $q$ disjoint copies of the circle in the case of a $q$-components link, and the resulting $\mathbb{C}$-vector space will be denoted $\mathcal{A}(\coprod_q S^1)$.\\

The connected sum of circles can be extended to chorded circles, thereby defining a product on $\mathcal{A}$, making it into an associative and commutative algebra. The Kontsevich integral will be valued in the graded completion $\overline{\mathcal{A}}=\prod_{m \geq 0}\mathcal{A}^{(m)}$ of the algebra $\mathcal{A}$.\\

As far as knots are concerned, we will work with Morse knots and geometric tangles, and for that purpose one considers the following decomposition of $\mathbb{R}^3$ as the product of the complex plane and the real line: $\mathbb{R}^3=\mathbb{R}^2\times \mathbb{R} \simeq \mathbb{C}\times \mathbb{R}$, with local coordinates $z$ on the complex plane and $t$ on the real line for time. A morse knot $K$ is such that $t\circ K$ is a Morse function on $S^1$. If one denotes by $Z$ the Kontsevich integral functional on knots, if $K$ is a Morse knot, one defines ~\cite{K}, ~\cite{DBN}:
\beq
Z(K):=\sum_{m\geq 0} \frac{1}{(2 \pi i)^m}\int_{t_{min}< t_1<...<t_m<t_{max}}\sum_{P\; applicable}(-1)^{\varepsilon(P)}D_P\prod_{1\leq i \leq m}\dlog \vartriangle \!\!z[P_i] \label{IK}
\eeq
where $t_{min}$ and $t_{max}$ are the min and max values of $t$ on $K$ respectively, $P$ is an $m$-vector each entry of which is a pair of points on the image of the knot $K$, $P=(P_1,...,P_m)$, where the $i$-th entry $P_i$ corresponds to a pair of points on the knot. One refers to such $P$'s as pairings. If we further situate these paired points at some height $t_i$, and denote these two points by $z_i$ and $z'_i$, then we define $\vartriangle \!\! z[P_i]:=z_i-z'_i$. We denote by $K_P$ the knot $K$ with $m$ pairs of points placed on it following the prescription given by $P$, along with chords connecting such points at a same height. A pairing is said to be applicable if each entry is a pair of two distinct points on the knot, at the same height ~\cite{DBN}. We will assume that all chords are horizontal on knots and will drop the adjective applicable, simply referring to $P$'s as pairings. One denotes by $\varepsilon(P)$ the number of those points ending on portions of $K$ that are locally oriented down. For example if $P=(z(t),z'(t))$ and $K$ is decreasing at $z(t)$, then it will contribute 1 to $\varepsilon(P)$. One also define the length of $P$ to be $|P|$, the number of pairings it is a combination of. If we denote by $\iota_K$ the embedding defining the knot $K$ then $D_P$ is defined to be the chord diagram one obtains by taking the inverse image of $K_P$ under $\iota_K$, that is $D_P=\iota_K^{-1} K_P$. This generalizes immediately to the case of Morse links, and in this case the geometric coefficient will not be an element of $\barA$ but will be an element of $\barA (\coprod_q S^1)$ if the argument of $Z$ is a $q$-components link.\\

Now if one wants to make this integral into a true knot invariant, then one corrects it as follows. Consider the embedding in $S^3$ of the trivial knot as:\\
\setlength{\unitlength}{0.5cm}
\begin{picture}(5,6)(-11,0)
\thicklines
\put(1,3){\oval(2,2)[t]}
\put(3,3){\oval(2,2)[b]}
\put(5,3){\oval(2,2)[t]}
\put(3,3){\oval(6,5)[b]}
\end{picture}\\ \\
Consider the following correction ~\cite{K}:
\beq
\hat{Z}:=Z(\:\setlength{\unitlength}{0.1cm}
\begin{picture}(5,6)
\thicklines
\put(1,2){\oval(2,2)[t]}
\put(3,2){\oval(2,2)[b]}
\put(5,2){\oval(2,2)[t]}
\put(3,2){\oval(6,5)[b]}
\end{picture}\;)^{-m}.Z \label{modK}
\eeq
where the dot is the product on chord diagrams extended by linearity, and $m$ is a function that captures the number of maximal points of any knot $K$ that is used as an argument of $Z$. In the case of links, there will be one such correction for each component of the link, with a power $m_i$ on the correction term for the $i$-th component, where $m_i$ is the number of maximal points of the $i$-th link component. Equivalently, $\hat{Z}(L)$ is the same as $Z(L)$ save that every $i$-th link component in the expression for $Z(L)$ is multiplied by $\nu^{m_i}$, $1 \leq i \leq q$. In what follows we will be working with $Z$ only and not its corrected form $\hat{Z}$.\\

The Kontsevich integral $Z(K)$ of a knot $K$ can be computed as in ~\eqref{IK}, which can easily be generalized to the case of links. However, it can also be computed by first putting the knot $K$ in braid position ~\cite{A}. This is done as follows. We can generalize the Kontsevich integral to the case of tangles ~\cite{DBN}, and in particular to that of braids. By closing the braid to recover the knot, the Kontsevich integral of the braid then yields $Z(K)$. In this work we will mainly focus on the integral of braids. Braids close to links. This enables one to define the integral $Z(L)$ of links $L$ whose coefficients are valued in chord diagrams with support on the link $L$. By taking the preimage of such chord diagrams under the embedding map defining the link $L$, the coefficients become valued in $\barA (\coprod S^1)$ where we have as many copies of $S^1$ as we have components in the link. This is how we are naturally led to consider the Kontsevich integral of links, and we will follow the above procedure for computing such an integral.\\

In section 2, we introduce the configuration space of $N$ unordered points in the complex plane, the natural setting for studying braids. In section 3 we present the general notion of chord diagrams. After introduction of this fundamental material we present the background work done on the Kontsevich integral and how it was argued that it was the expansion of some holonomy in section 4. Section 5 presents a detailed proof that the knot functional generating the Kontsevich integral is exactly the holonomy for some connection on a bundle of chord diagrams over the configuration space of N points. Section 6 is a careful derivation of the Knizhnik-Zamolodchikov equation from first principles. \\

\begin{acknowledgments}
The author would like to thank D.Yetter for valuable discussions regarding certain aspects of this work.
\end{acknowledgments}

\section{The configuration space of $N$ points in the plane}
A link in $S^3$ is ambient isotopic to a closed braid ~\cite{A} ~\cite{JB}, so that one can deform a link into a braid part, outside of which all its strands are parallel. For a given link, let $N$ be the number of strands of its braid part. $N$ will depend on the link we have chosen. The transversal intersection of these $N$ strands with the complex plane will yield a set of $N$ distinct points, each point resulting from the intersection of one strand with this plane. It is natural then to study, for any given $N$, the space $X_N$ defined as the configuration space of $N$ distinct unordered points in the complex plane:
\beq
X_N:=\{(z_1,...,z_n) \in \mathbb{C}^N | z_i=z_j \Rightarrow i=j\}/ S_N=(\mathbb{C}^N-\Delta)/S_N
\eeq
where $S_N$ is the permutation group on $N$ elements and $\Delta$ is the big diagonal in $\mathbb{C}^N$. The labeling of points of $X_N$ is not induced by any ordering on $\mathbb{C}^N$ but rather is a way to locate the $N$ points in the complex plane whose collection defines a single point of $X_N$. We will sometimes write $\sum_{1 \leq i \leq N}[z_i]$ instead of $\{z_1,...,z_N\}$ to represent points in configuration space. The points $z_1,...,z_N$ of the complex plane defining a point $Z=\sum_{1 \leq i \leq N}[z_i]$ of $X_N$ will be referred to as the $N$ defining points of $Z$. We consider the topology $\tau$ on $X_N$ generated by open sets of the form $U=\{U_1,...,U_N\}$ where the $U_i,\,1 \leq i \leq N$ are non-overlapping open sets in the complex plane. We will also refer to those open sets $U_1,...,U_N$ as the $N$ defining open sets of the open set $U$ of $X_N$. \\

We review the basic terminology pertaining to braids as presented in ~\cite{JB} since we will work extensively with braids in what follows. The pure braid group of $\mathbb{C}^N$ is defined to be $\pi_1 (\mathbb{C}^N - \Delta)$, and the braid group of $\mathbb{C}^N$ is defined to be $\pi_1 (X_N)$. A braid is an element of this latter group. If $q$ denotes the regular projection map from $\mathbb{C}^N - \Delta$ to $X_N$, $Z=(z_1,...,z_N) \in \mathbb{C}^N - \Delta$, $qZ \in X_N$, then $\gamma \in \pi_1 (X_N ,qZ)$ based at $qZ$ is given by a loop $\gamma =\{\gamma_1,...,\gamma_N \}$ which lifts uniquely to a path in $\mathbb{C}^N - \Delta$ based at $Z$ that without loss of generality we will denote by the same letter $\gamma$. Then we have $\gamma =(\gamma_1 ,...,\gamma_N )$. The graph of the $i$-th coordinate of $\gamma$ is defined to be $\Gamma_i := \{(\gamma_i (t),t)\;|\;t \in I \}$, $1 \leq i \leq N$. Each such graph $\Gamma_i$ defines an arc $\tilde{\gamma}_i \in \mathbb{C} \times I$ and $\tilde{\gamma}:=\cup_{1 \leq i \leq N} \tilde{\gamma}_i \in \mathbb{C}\times I $ is called a geometric braid, which we will refer to as the lift of $\gamma$. As such it is open, and its closure is a closed braid.

\section{Chord diagrams}
We will be interested in considering chord diagrams with support on a point, and later on chord diagrams with support on a geometric braid $\tilde{\gamma}$ in $\mathbb{C} \times I$, so for that purpose one considers a more general definition of chord diagrams than the one presented in the introduction which was sufficient to discuss the Kontsevich integral of knots.
\begin{ceedee}[\cite{lemu}]
Let $X$ be a one dimensional, compact, oriented, smooth manifold with numbered components. A chord diagram with support on $X$ is a set of finitely many unordered pairs of distinct non-boundary points on $X$ defined modulo orientation and component preserving homeomorphisms. One realizes each pair geometrically by drawing a dashed line, or chord, stretching from one point to the other. One denotes by $\mathcal{A}(X)$ the $\mathbb{C}$-vector space spanned by chord diagrams with support on $X$ modulo the framing indepence relation as well as the 4-T relation: if $i$, $j$ and $k$ are indices for components of $X$ on which chords are ending, then locally the 4-T relation can be written:

\setlength{\unitlength}{1cm}
\begin{picture}(1,2)(0,-0.5)
\multiput(0,0.75)(0.2,0){5}{\line(1,0){0.1}}
\multiput(1,0.25)(0.2,0){5}{\line(1,0){0.1}}
\put(0,0.9){\vector(0,1){0.2}}
\put(1,0.9){\vector(0,1){0.2}}
\put(2,0.9){\vector(0,1){0.2}}
\linethickness{0.3mm}
\put(0,0){\line(0,1){1}}
\put(1,0){\line(0,1){1}}
\put(2,0){\line(0,1){1}}
\put(0.1,-0.2){$i$}
\put(1.1,-0.2){$j$}
\put(2.1,-0.2){$k$}
\put(2.5,0.5){$+$}
\end{picture}
\setlength{\unitlength}{1cm}
\begin{picture}(1,2)(-2,-0.5)
\multiput(0,0.75)(0.2,0){5}{\line(1,0){0.1}}
\multiput(0,0.25)(0.2,0){10}{\line(1,0){0.1}}
\put(0,0.9){\vector(0,1){0.2}}
\put(1,0.9){\vector(0,1){0.2}}
\put(2,0.9){\vector(0,1){0.2}}
\linethickness{0.3mm}
\put(0,0){\line(0,1){1}}
\put(1,0){\line(0,1){1}}
\put(2,0){\line(0,1){1}}
\put(0.1,-0.2){$i$}
\put(1.1,-0.2){$j$}
\put(2.1,-0.2){$k$}
\put(2.5,0.5){$=$}
\end{picture}
\setlength{\unitlength}{1cm}
\begin{picture}(1,2)(-4,-0.5)
\multiput(0,0.25)(0.2,0){5}{\line(1,0){0.1}}
\multiput(1,0.75)(0.2,0){5}{\line(1,0){0.1}}
\put(0,0.9){\vector(0,1){0.2}}
\put(1,0.9){\vector(0,1){0.2}}
\put(2,0.9){\vector(0,1){0.2}}
\linethickness{0.3mm}
\put(0,0){\line(0,1){1}}
\put(1,0){\line(0,1){1}}
\put(2,0){\line(0,1){1}}
\put(0.1,-0.2){$i$}
\put(1.1,-0.2){$j$}
\put(2.1,-0.2){$k$}
\put(2.5,0.5){$+$}
\end{picture}
\setlength{\unitlength}{1cm}
\begin{picture}(1,2)(-6,-0.5)
\multiput(0,0.25)(0.2,0){5}{\line(1,0){0.1}}
\multiput(0,0.75)(0.2,0){10}{\line(1,0){0.1}}
\put(0,0.9){\vector(0,1){0.2}}
\put(1,0.9){\vector(0,1){0.2}}
\put(2,0.9){\vector(0,1){0.2}}
\linethickness{0.3mm}
\put(0,0){\line(0,1){1}}
\put(1,0){\line(0,1){1}}
\put(2,0){\line(0,1){1}}
\put(0.1,-0.2){$i$}
\put(1.1,-0.2){$j$}
\put(2.1,-0.2){$k$}
\end{picture}

One defines the degree of a chord diagram to be the number of chords a chord diagram has, and we call it the chord degree. This induces a graded decomposition of the space $\mathcal{A}(X)$:
\beq
\mathcal{A}(X)=\bigoplus_{m\geq 0}\mathcal{A}^{(m)}(X)
\eeq
where $\mathcal{A}^{(m)}(X)$ is the $\mathbb{C}$-vector space of chord diagrams of degree $m$ with support on $X$. We write $\overline{\mathcal{A}}(X)$ for the graded completion of $\mathcal{A}(X)$.
\end{ceedee}
One is especially interested in the case where $X$ is a geometric braid $\tilde{\gamma} \in \mathbb{C} \times I$ corresponding to some loop $\gamma$ in $X_N$. The strands are oriented up, $t=0$ being the bottom plane of the space $\mathbb{C}\times I$ in which the braid is embedded, $t=1$ corresponding to the top plane. Since indices for pairings match those for the times at which they are located, chords will be ordered from the bottom up. For $m=1$, a chord will stretch between two strands, say the strands indexed by $i$ and $j$, and we will denote such a chord diagram by  $|ij\rangle \in \mathcal{A}(\tilde{\gamma})$, corresponding to the  pairing $(ij)$ in this case. If we want to insist that the skeleton of the chord diagram is a given geometric braid $\tilde{\gamma}$ then we write $|ij\rangle (\tilde{\gamma})$. In certain situations it will be necessary to also indicate at which point along the braid is the chord situated for location purposes. Once we have $|ij\rangle (\tilde{\gamma})$, it is sufficient to have the height $t \in I$ at which we have to place the chord $|ij\rangle$ on $\tilde{\gamma}$ and $|ij\rangle (\tilde{\gamma})(t)$ is defined to be a chord between the $i$-th and $j$-th strands of $\tilde{\gamma}$ at height $t$, or equivalently a chord between $(\gamma_i (t),t)$ and $(\gamma_j (t),t)$. In that case we work with a representative of the class defining the chord diagram $|ij\rangle (\tilde{\gamma})$. We define a non-commutative, associative product $\ess$ on $\mathcal{A}(X)$ for some geometric braid $X$ as follows:

\begin{conc}
Let $X$ be a geometric braid with $N$ strands. We define the $\ess$-product of two chord diagrams $D_1 \in \mathcal{A}^{(m_1)}(X)$ and $D_2 \in \mathcal{A}^{(m_2)}(X)$, and we denote it by $D_1 \ess D_2$ to be the chord diagram in $\mathcal{A}^{(m_1+m_2)}(X)$ whose chords are, from the top, those of $D_1$ followed by those of $D_2$. Equivalently, $D_1 \ess D_2$ is the concatenation of the chord diagrams $D_1$ and $D_2$ with support the trivial braid, followed by the concatenation with $X$. The identity for this product is $X$ itself, the empty chord diagram. We will sometimes use a vertical notation for such products to make the reading easier. For example, we will sometimes write
\beq
\begin{array}{c}
  D_1 \\
  \ess \\
  D_2
\end{array}
\eeq
instead of $D_1 \ess D_2$. By definition, this product is associative, and non-commutative.
\end{conc}
\begin{ex1}
Pick $X$ as the following geometric braid on 3 strands:\\
\setlength{\unitlength}{0.5cm}
\begin{picture}(5,6)(-10,0)
\thicklines
\multiput(3,3)(2,0){3}{\line(0,1){2}}
\put(3,1){\line(0,1){2}}
\put(6,3){\oval(2,2)[br]}
\put(6.75,1){\oval(0.5,2)[tr]}
\put(5.25,3){\oval(0.5,2)[bl]}
\put(6,1){\oval(2,2)[tl]}
\end{picture}\\ \\
and:\\
\setlength{\unitlength}{0.5cm}
\begin{picture}(5,6)(-10,0)
\put(0,3){$D_1$}
\multiput(3,3)(0.2,0){10}{\line(1,0){0.1}}
\multiput(3,3.2)(0.2,0){10}{\line(1,0){0.1}}
\multiput(3,3.4)(0.2,0){10}{\line(1,0){0.1}}
\thicklines
\multiput(3,3)(2,0){3}{\line(0,1){2}}
\put(3,1){\line(0,1){2}}
\put(6,3){\oval(2,2)[br]}
\put(6.75,1){\oval(0.5,2)[tr]}
\put(5.25,3){\oval(0.5,2)[bl]}
\put(6,1){\oval(2,2)[tl]}
\end{picture}\\ \\
\setlength{\unitlength}{0.5cm}
\begin{picture}(5,6)(-10,0)
\put(0,3){$D_2$}
\multiput(5,3)(0.2,0){10}{\line(1,0){0.1}}
\thicklines
\multiput(3,3)(2,0){3}{\line(0,1){2}}
\put(3,1){\line(0,1){2}}
\put(6,3){\oval(2,2)[br]}
\put(6.75,1){\oval(0.5,2)[tr]}
\put(5.25,3){\oval(0.5,2)[bl]}
\put(6,1){\oval(2,2)[tl]}
\end{picture}\\ \\
\setlength{\unitlength}{0.5cm}
\begin{picture}(5,6)(-10,0)
\put(0,3){$D_3$}
\multiput(3,3)(0.2,0){20}{\line(1,0){0.1}}
\multiput(3,3.2)(0.2,0){20}{\line(1,0){0.1}}
\thicklines
\multiput(3,3)(2,0){3}{\line(0,1){2}}
\put(3,1){\line(0,1){2}}
\put(6,3){\oval(2,2)[br]}
\put(6.75,1){\oval(0.5,2)[tr]}
\put(5.25,3){\oval(0.5,2)[bl]}
\put(6,1){\oval(2,2)[tl]}
\end{picture}\\ \\
then we have:\\
\setlength{\unitlength}{0.5cm}
\begin{picture}(22,6)(-4,0)
\put(-2.5,3){$D_1 \ess D_2=$}
\multiput(3,3)(0.2,0){10}{\line(1,0){0.1}}
\multiput(3,3.2)(0.2,0){10}{\line(1,0){0.1}}
\multiput(3,3.4)(0.2,0){10}{\line(1,0){0.1}}
\put(8,3){$\ess$}
\multiput(12,3)(0.2,0){10}{\line(1,0){0.1}}
\put(15,3){$=$}
\multiput(17,4)(0.2,0){10}{\line(1,0){0.1}}
\multiput(17,4.2)(0.2,0){10}{\line(1,0){0.1}}
\multiput(17,4.4)(0.2,0){10}{\line(1,0){0.1}}
\multiput(19,3)(0.2,0){10}{\line(1,0){0.1}}
\thicklines
\multiput(3,3)(2,0){3}{\line(0,1){2}}
\put(3,1){\line(0,1){2}}
\put(6,3){\oval(2,2)[br]}
\put(6.75,1){\oval(0.5,2)[tr]}
\put(5.25,3){\oval(0.5,2)[bl]}
\put(6,1){\oval(2,2)[tl]}
\multiput(10,3)(2,0){3}{\line(0,1){2}}
\put(10,1){\line(0,1){2}}
\put(13,3){\oval(2,2)[br]}
\put(13.75,1){\oval(0.5,2)[tr]}
\put(12.25,3){\oval(0.5,2)[bl]}
\put(13,1){\oval(2,2)[tl]}
\multiput(17,3)(2,0){3}{\line(0,1){2}}
\put(17,1){\line(0,1){2}}
\put(20,3){\oval(2,2)[br]}
\put(20.75,1){\oval(0.5,2)[tr]}
\put(19.25,3){\oval(0.5,2)[bl]}
\put(20,1){\oval(2,2)[tl]}
\end{picture}\\ \\
whereas:\\
\setlength{\unitlength}{0.5cm}
\begin{picture}(22,6)(-9,0)
\put(-2.5,3){$D_2 \ess D_1=$}
\multiput(3,3)(0.2,0){10}{\line(1,0){0.1}}
\multiput(3,3.2)(0.2,0){10}{\line(1,0){0.1}}
\multiput(3,3.4)(0.2,0){10}{\line(1,0){0.1}}
\multiput(5,4)(0.2,0){10}{\line(1,0){0.1}}
\thicklines
\multiput(3,3)(2,0){3}{\line(0,1){2}}
\put(3,1){\line(0,1){2}}
\put(6,3){\oval(2,2)[br]}
\put(6.75,1){\oval(0.5,2)[tr]}
\put(5.25,3){\oval(0.5,2)[bl]}
\put(6,1){\oval(2,2)[tl]}
\end{picture}\\ \\
thereby illustrating the non-commutative nature of the $\ess$-product.
For the associativity:\\
\setlength{\unitlength}{0.5cm}
\begin{picture}(22,6)(-6,0)
\put(-5,3){$(D_1 \ess D_2 ) \ess D_3 =$}
\multiput(3,4)(0.2,0){10}{\line(1,0){0.1}}
\multiput(3,4.2)(0.2,0){10}{\line(1,0){0.1}}
\multiput(3,4.4)(0.2,0){10}{\line(1,0){0.1}}
\multiput(5,3)(0.2,0){10}{\line(1,0){0.1}}
\put(8,3){$\ess$}
\multiput(10,3)(0.2,0){20}{\line(1,0){0.1}}
\multiput(10,3.2)(0.2,0){20}{\line(1,0){0.1}}
\put(15,3){$=$}
\multiput(17,4)(0.2,0){10}{\line(1,0){0.1}}
\multiput(17,4.2)(0.2,0){10}{\line(1,0){0.1}}
\multiput(17,4.4)(0.2,0){10}{\line(1,0){0.1}}
\multiput(19,3.5)(0.2,0){10}{\line(1,0){0.1}}
\multiput(17,3)(0.2,0){20}{\line(1,0){0.1}}
\multiput(17,3.2)(0.2,0){20}{\line(1,0){0.1}}
\thicklines
\multiput(3,3)(2,0){3}{\line(0,1){2}}
\put(3,1){\line(0,1){2}}
\put(6,3){\oval(2,2)[br]}
\put(6.75,1){\oval(0.5,2)[tr]}
\put(5.25,3){\oval(0.5,2)[bl]}
\put(6,1){\oval(2,2)[tl]}
\multiput(10,3)(2,0){3}{\line(0,1){2}}
\put(10,1){\line(0,1){2}}
\put(13,3){\oval(2,2)[br]}
\put(13.75,1){\oval(0.5,2)[tr]}
\put(12.25,3){\oval(0.5,2)[bl]}
\put(13,1){\oval(2,2)[tl]}
\multiput(17,3)(2,0){3}{\line(0,1){2}}
\put(17,1){\line(0,1){2}}
\put(20,3){\oval(2,2)[br]}
\put(20.75,1){\oval(0.5,2)[tr]}
\put(19.25,3){\oval(0.5,2)[bl]}
\put(20,1){\oval(2,2)[tl]}
\end{picture}\\ \\
but we also have:\\
\setlength{\unitlength}{0.5cm}
\begin{picture}(22,6)(-6,0)
\put(-5,3){$D_1 \ess(D_2\ess D_3)=$}
\multiput(3,3)(0.2,0){10}{\line(1,0){0.1}}
\multiput(3,3.2)(0.2,0){10}{\line(1,0){0.1}}
\multiput(3,3.4)(0.2,0){10}{\line(1,0){0.1}}
\multiput(12,4)(0.2,0){10}{\line(1,0){0.1}}
\put(8,3){$\ess$}
\multiput(10,3)(0.2,0){20}{\line(1,0){0.1}}
\multiput(10,3.2)(0.2,0){20}{\line(1,0){0.1}}
\put(15,3){$=$}
\multiput(17,4)(0.2,0){10}{\line(1,0){0.1}}
\multiput(17,4.2)(0.2,0){10}{\line(1,0){0.1}}
\multiput(17,4.4)(0.2,0){10}{\line(1,0){0.1}}
\multiput(19,3.5)(0.2,0){10}{\line(1,0){0.1}}
\multiput(17,3)(0.2,0){20}{\line(1,0){0.1}}
\multiput(17,3.2)(0.2,0){20}{\line(1,0){0.1}}
\thicklines
\multiput(3,3)(2,0){3}{\line(0,1){2}}
\put(3,1){\line(0,1){2}}
\put(6,3){\oval(2,2)[br]}
\put(6.75,1){\oval(0.5,2)[tr]}
\put(5.25,3){\oval(0.5,2)[bl]}
\put(6,1){\oval(2,2)[tl]}
\multiput(10,3)(2,0){3}{\line(0,1){2}}
\put(10,1){\line(0,1){2}}
\put(13,3){\oval(2,2)[br]}
\put(13.75,1){\oval(0.5,2)[tr]}
\put(12.25,3){\oval(0.5,2)[bl]}
\put(13,1){\oval(2,2)[tl]}
\multiput(17,3)(2,0){3}{\line(0,1){2}}
\put(17,1){\line(0,1){2}}
\put(20,3){\oval(2,2)[br]}
\put(20.75,1){\oval(0.5,2)[tr]}
\put(19.25,3){\oval(0.5,2)[bl]}
\put(20,1){\oval(2,2)[tl]}
\end{picture}\\ \\
In words, the product $\ess$ is just a superposition map. Extend $\ess$ by linearity, thereby making $(\mathcal{A}(X), +, \ess)$ into a $\mathbb{C}$-algebra.
\end{ex1}

For $X$ a geometric braid, we have the following fact for $(\mathcal{A}(X), +, \ess)$:
\beq
\mathcal{A}^{(m)}(X)=\begin{array}{c}
                                  \mathcal{A}^{(1)}(X) \\
                                  \ess \\
                                  \vdots \\
                                  \ess \\
                                  \mathcal{A}^{(1)}(X)
                                \end{array}\qquad \Bigg\}m
                                \label{stack}
\eeq
\\

We will be interested in chord diagrams supported at a point of $C_N\mathbb{C}:=\mathbb{C}^N - \Delta$, or a point of $X_N$. For a point $Z=\{z_1,...,z_N\} \in X_N$, some $P=(k,l)$, $1 \leq k \neq l \leq N$, $|P\rangle(Z) \in \mathcal{A}(Z)$ is a chord between $z_k$ and $z_l$ in $X_N$. We denote by $\mathcal{A}(X_N)$ the complex vector space spanned by all such elements, and by $\overline{\mathcal{A}}(X_N)$ its graded completion.\\

We will also be interested in working with elements of $\mathcal{A}^{(1)}(X) \otimes \Omega^1 (\log  \mathbb{C})$ with $X$ to be determined, that we denote by $|ij\rangle \dlog (z_i-z_j)$. In this notation if $\tilde{\gamma} \in \mathbb{C} \times I$ is a geometric braid obtained from lifting a loop $\gamma$ in $X_N$, if we arbitrarily index the $N$ strands of $\tilde{\gamma}$, then the $k$-th strand is obtained from lifting a path in the complex plane given by some function $z(t)$, $t \in I$. For a chord $|ij\rangle$ between the $i$-th and the $j$-th strands which are the respective lifts of paths $\gamma_i$ and $\gamma_j$ in the complex plane given by functions $z_i(t)$ and $z_j(t)$, $t\in I$, then $z_i-z_j$ is the difference of two such functions. This leads us to defining the subspace $\Omega^1 (\log \!\vartriangle \! \!\mathbb{C})$ of log differential functionals on $\mathbb{C}$, defined by $\dlog(\vartriangle \!\! z[z_1, z_2])=\dlog(z_1-z_2)$. We have a projection:
\begin{align}
\Omega^1(log \!\vartriangle \!\!\mathbb{C}) & \xrightarrow{p_2} (\mathbb{C}^2-\Delta)/S_2 \\
dlog(z_i-z_j) &\mapsto \{z_i,z_j\}
\end{align}
On the other hand, $|ij\rangle$ represents a chord stretching between the $i$-th and $j$-th strand of a given braid. We define a projection:
\begin{align}
\mathcal{A}^{(1)}(braid) &\xrightarrow{\widetilde{p_1}} (\mathbb{C}^2-\Delta)/S_2 \\
|ij\rangle(Z) &\mapsto \{z_i,z_j\}
\end{align}
It follows that we must have $|ij\rangle \dlog(z_i-z_j) \in \mathcal{A}^{(1)}(\text{braid})\times_{S_2 \mathbb{C}} \Omega^1 (\log \!\vartriangle \!\!\mathbb{C}):=V^{(1)}$, and $|ij\rangle \dlog (z_i-z_j) \in V^{(1)}_{ij}$, so that:
\beq
V^{(1)}=\oplus_{1 \leq i<j \leq N} V^{(1)}_{ij}
\eeq
If necessary we may use a subscript $V_X^{(1)}$ to emphasize the dependency of that vector space on the one dimensional manifold X.\\

Using the $\ess$-product on chord diagrams, extended to chord diagram valued log differential as follows:
\beq
\begin{array}{c}
  |ij\rangle.dlog(z_i-z_j) \\
  \ess\\
  |kl\rangle.dlog(z_k-z_l)
  \end{array}=
  \begin{array}{c}
  |ij\rangle\\
    \ess \\
    |kl\rangle
  \end{array}.dlog(z_i-z_j).dlog(z_k-z_l)
\eeq
if we define:
\beq
V^{(m)}=\ess_{1 \leq k \leq m}V^{(1)}
\eeq
as well as:
\beq
V^{(m)}_{IJ}=\ess_{1 \leq k \leq m}V^{(1)}_{i_k,j_k}
\eeq
where $I=(i_1,\cdots, i_m)$ and $J=(j_1,\cdots, j_m)$, we have the following result:
\begin{prodV}
\beq
V^{(m)}=\bigoplus_{\substack{ I<J \\ I,J \in \{1,...,N\}^m}} V^{(m)}_{I,J}
\eeq
where $I=(i_1,...,i_m)<J=(j_1,...,j_m)$ if $i_k<j_k$ for all k, $\; 1 \leq k \leq m $.
\end{prodV}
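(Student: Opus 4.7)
The plan is to proceed by induction on $m$, exploiting the bilinearity of the $\ess$-product together with the fact that bilinear operations distribute over direct sums. The base case $m=1$ is immediate from the definition $V^{(1)}=\bigoplus_{1\le i<j\le N}V^{(1)}_{ij}$, in which the multi-index inequality $I<J$ reduces to $i<j$.

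For the inductive step I would assume the decomposition holds at level $m-1$ and write $V^{(m)}=V^{(m-1)}\ess V^{(1)}$, which follows from the definition $V^{(m)}=\ess_{1\le k\le m}V^{(1)}$ and the associativity of $\ess$. The $\ess$-product is defined on generators by superposition and extended by $\mathbb{C}$-linearity; its extension to chord diagram valued log differentials via the formula displayed immediately before the lemma is bilinear in its two arguments, so it distributes over direct sums in each factor. Applying the inductive hypothesis to the first factor and the base case to the second, I would obtain
\begin{align*}
V^{(m)} &= \Bigl(\bigoplus_{\substack{I'<J'\\ I',J'\in\{1,\dots,N\}^{m-1}}}V^{(m-1)}_{I',J'}\Bigr)\,\ess\,\Bigl(\bigoplus_{1\le i<j\le N}V^{(1)}_{ij}\Bigr) \\
&= \bigoplus_{\substack{I'<J'\\ i<j}} \bigl(V^{(m-1)}_{I',J'}\ess V^{(1)}_{ij}\bigr).
\end{align*}
Setting $I=(I',i)$ and $J=(J',j)$, the combined constraints $I'<J'$ and $i<j$ become exactly $i_k<j_k$ for all $1\le k\le m$, and by the definition $V^{(m)}_{I,J}=\ess_{1\le k\le m}V^{(1)}_{i_k,j_k}$ each inner summand matches $V^{(m)}_{I,J}$, closing the induction.

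The main obstacle will be verifying that the decomposition is genuinely a direct sum rather than just a description of a spanning family. For this I would observe that every element of $V^{(m)}_{I,J}$ carries a log differential factor $\dlog(z_{i_k}-z_{j_k})$ at the $k$-th level, and that the relations imposed on the chord diagram component, namely 4-T and framing independence, act only on that component and leave the log differential factors untouched. Hence the sequence of index pairs at each chord level is an intrinsic invariant of any element of $V^{(m)}$, which ensures that distinct $(I,J)$'s index linearly independent subspaces. Once this bookkeeping is in place, the multilinear distributivity above delivers the claimed direct sum decomposition.
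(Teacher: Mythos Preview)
Your proof is correct and follows essentially the same approach as the paper: both arguments amount to distributing the bilinear $\ess$-product over the direct sum decomposition $V^{(1)}=\bigoplus_{i<j}V^{(1)}_{ij}$ in each of the $m$ factors. The paper does this in a single expansion, writing $\ess_{1\le k\le m}\bigl(\bigoplus_{i_k<j_k}V^{(1)}_{i_k,j_k}\bigr)=\bigoplus_{I<J}V^{(m)}_{I,J}$ directly, whereas you organize the same distributivity step inductively; your additional paragraph justifying directness via the log differential factors is more careful than the paper, which simply takes the directness for granted.
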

\begin{proof}
It suffices to write:
\begin{align}
V^{(m)} &= \ess_{1 \leq k \leq m} V^{(1)}\\
&=\ess_{1 \leq k \leq m} \oplus_{1 \leq i_k < j_k \leq N} V^{(1)}_{i_k,j_k}\\
& \nonumber \\
&=\bigoplus_{\substack{1 \leq i_1<j_1 \leq N \\ \vdots \\ 1 \leq i_m<j_m \leq N}} \begin{array}{c}
V^{(1)}_{i_1,j_1} \\
\ess \\
 \vdots \\
  \ess \\
   V^{(1)}_{i_m,j_m}
    \end{array}\\                                                                             & \nonumber \\
&=\bigoplus_{\substack{I<J \\ I,J \in \{1,...,N\}^m}} V^{(m)}_{I,J}
\end{align}
where $I=(i_1,...,i_m)<J=(j_1,...,j_m)$ if $i_k<j_k$ for all k, $\; 1 \leq k \leq m $.
\end{proof}

 The elements of $V^{(m)}$ are of the form:\\
 \setlength{\unitlength}{0.5cm}
\begin{picture}(3,4)(-6,0)
\multiput(1.2,1.6)(0.2,0){12}{\line(1,0){0.1}}
\multiput(2.4,0.4)(0.2,0){12}{\line(1,0){0.1}}
\multiput(3,0.8)(0,0.2){3}{\circle*{0.05}}
\linethickness{0.3mm}
\multiput(0.4,1)(0.2,0){3}{\circle*{0.05}}
\multiput(5.2,1)(0.2,0){3}{\circle*{0.05}}
\linethickness{0.6mm}
\put(0,0){\line(0,1){2}}
\put(1.2,0){\line(0,1){2}}
\put(2.4,0){\line(0,1){2}}
\put(3.6,0){\line(0,1){2}}
\put(4.8,0){\line(0,1){2}}
\put(6,0){\line(0,1){2}}
\put(0.1,-0.4){$1$}
\put(1.3,-0.4){$i$}
\put(2.5,-0.4){$j$}
\put(3.7,-0.4){$k$}
\put(4.9,-0.4){$l$}
\put(6.1,-0.4){$N$}
\put(6.3,1){$.$}
\put(6.5,1){$\prod_{1 \leq k \leq m}dlog(z_{s_k}-z_{t_k})$}
\end{picture}\\ \\
where in this example we have:
\begin{align}
s_1=j &, t_1=l\\
&\vdots \nonumber \\
s_m=i &, t_m=k
\end{align}
and $z_{s_i},z_{t_i}$ are the coordinates in the complex plane for the points of intersection  of the $s_i$-th and $t_i$-th strands respectively with the complex plane containing the $i$-th chord. We can write such elements in compact form as $\sum_{|P|=m} \lambda_P . |P\rangle .\text{d}^m \log \! \vartriangle \!\!z[P]$, the $\lambda_P$ are complex coefficients, and $\text{d}^m \log\! \vartriangle \!\!z[P]$ is short for $\prod_{1 \leq i \leq m} \dlog(\vartriangle \!\!z[P_i])=\prod_{1 \leq k \leq m} \dlog (z_{s_k}-z_{t_k})$ where we define $\vartriangle \!\!z[P_i]=z_k-z_l$ if $\widetilde{p_1}|P_i\rangle(Z) = \{z_{s_i}, z_{t_i}\}$.\\

\section{Background material}
It is possible to generalize the work of Chen ~\cite{ChI},~\cite{ChII} on connections and holonomy of non-commutative graded algebras fibered over manifolds to the case where the algebra is taken to be $\overline{\mathcal{A}}(I^N)$ where $I^N$ denotes the trivial braid with $N$ vertical strands, equally spaced, of length one, leading to express the Kontsevich integral as the transport of a formal connection along a loop in $C_N \mathbb{C}=\mathbb{C}^N - \Delta$ ~\cite{Koh},~\cite{DBN}. We briefly summarize how this is done.\\

Chen ~\cite{ChI} first extended the definition of a linear connection on a vector bundle over some manifold to the case where the bundle has as fiber a not necessarily finite dimensional, non-commutative graded algebra. Consider $(\overline{\mathcal{A}}(I^N),+,\ess)$ ~\cite{DBN} which in this simple case has a product $\ess$ given by the connected sum $\#$. For two chord diagrams $|P\rangle(I^N)$ and $|P'\rangle(I^N)$ with support on $I^N$, one can define $|P\rangle(I^N)\# |P'\rangle(I^N)$ by gluing $|P\rangle(I^N)(t=0)$ to $|P'\rangle(I^N)(t=1)$ strand-wise. One further considers such chord diagrams mod the 4-T and framing independence relations, thus making the algebra with $\#$ as a product a non-commutative algebra. We see this algebra as being trivially fibered over $C_N \mathbb{C}$, and we put some connection $\nabla=d-\omega$ on it ~\cite{ChII}. One chooses $\omega$ to be the usual Knizhnik-Zamolodchikov connection 1-form:
 \beq
 \omega=\frac{1}{2\pi i}\sum_{1 \leq k < l \leq N}|kl\rangle(I^N).\dlog \vartriangle\!\!z_{kl}
 \eeq
 The transport of a section $\xi$ of $\overline{\mathcal{A}}(I^N)$ is given by solving the following graded equation for the parallel transport $\tau$ of $\xi$ along a loop $\gamma$ in the base $\CNC$ ~\cite{ChI}:
 \beq
 d\tau=\omega \tau
 \eeq
Solving this formally degree by degree, one gets the transport $\tau$ of the connection $\omega$ ~\cite{ChII}:
\beq
\tau=1+\sum_{n > 0}\int \omega^n
\eeq
Formally:
\begin{align}
\tau &=1 + \sum_{k<l}\left(\frac{1}{2 \pi i}\int \dlog \vartriangle \!\! z_{kl}\right)|kl\rangle (I^N) \nonumber \\
&+\sum_{\substack{i<j \\ k<l}}\left(\frac{1}{(2 \pi i)^2}\int_{t_1 < t_2}\dlog \vartriangle \!\!z_{ij}(t_2) \dlog \vartriangle \!\! z_{kl}(t_1)\right)|ij\rangle (I^N) \# |kl \rangle (I^N) + \cdots
\end{align}
Define a pairing:
\begin{align}
\langle - , - \rangle:T^*\CNC \times C_*(L \CNC) & \rightarrow \mathbb{C} \nonumber \\
(\omega, c) & \mapsto \langle \omega , c \rangle :=\int_c \omega = \int c^* \omega
\end{align}
for some smooth $c: S^1 \rightarrow \CNC$, and $C_*(L \CNC)$ denotes the free chain complex of non-degenerate loops in $\CNC$. Armed with these notations one can write, for $\gamma \in L \CNC$:
\begin{align}
\langle \tau, \gamma \rangle &=\langle 1, \gamma \rangle + \sum_{n > 0}\langle \omega^n, \gamma \rangle \\
&=\langle 1, \gamma \rangle + \sum_{k<l}\left(\frac{1}{2 \pi i}\int_{\gamma} \dlog \vartriangle \!\! z_{kl}\right)|kl\rangle (I^N) \nonumber \\
&+\sum_{\substack{i<j \\ k<l}}\left(\frac{1}{(2 \pi i)^2}\int_{\gamma ,\; t_1 < t_2}\dlog \vartriangle \!\!z_{ij}(t_2) \dlog \vartriangle \!\! z_{kl}(t_1)\right)|ij\rangle (I^N) \# |kl \rangle (I^N) + \cdots
\end{align}

which can equivalently be obtained by solving the Knizhnik-Zamolodchikov equation ~\cite{Koh} by the method of iterated integrals. Thus one obtains the generalized holonomy map ~\cite{ChII}:
\begin{align}
\Theta:\Omega C_N \mathbb{C} &\rightarrow \overline{\mathcal{A}}(I^N) \nonumber \\
&\gamma \mapsto \langle \tau,\gamma \rangle=\sum_{n \geq 0}\int_{\gamma} \omega^n
\end{align}
which is a sum of chord diagrams with support $I^N$ with coefficients of the form $(1/(2 \pi i)^m)\int_{\gamma}\text{d}^m \log \vartriangle\!\!z[P](\gamma)$ for a degree $m$ chord diagram. For some loop $\gamma$,  $\sum_{n \geq 0}\int_{\gamma} \omega^n$ is also referred to as the holonomy of the connection $\omega$ along the loop $\gamma$. It was argued ~\cite{Koh} that Kontsevich generalized this in his seminal paper to the Kontsevich integral of a knot, replacing trivial chord diagrams on $I^N$ by chord diagrams on $S^1$, starting from the actual knot $\gamma$.

\section{The Kontsevich integral as a holonomy}
In this section, we give a direct construction of the Kontsevich integral of a knot as being generated by the holonomy of some bundle of chord diagrams and not as a generalization of the formal holonomy of the Knizhnik-Zamolodchikov connection on $\barA(I^N)$. Braids are more naturally defined in $X_N:=(\mathbb{C}^N-\Delta)/S_N=C_N \mathbb{C}/S_N$ and not $C_N \mathbb{C}$ and it is on this space that we will consider the algebra bundle $(\overline{\mathcal{A}}(X_N),+,\cdot)$ of chord diagrams based at points of $X_N$, for which the product $\ess$ is simply denoted by $\cdot$ as it is a commutative product in this case. Defining the bundle to be $\overline{\mathcal{A}}(X_N)$ instead of $\overline{\mathcal{A}}(I^N)$ will allow us to have integrands that are not simply chord diagrams with support on $I^N$ but chord diagrams with support the link itself, the way Kontsevich initially wrote down his formula for his integral. In this formalism, for a link $L$ that once put in braid position corresponds to a geometric braid $\tilde{\gamma} \in \mathbb{C} \times I$ with $N$ strands, obtained by lifting some loop $\gamma \in X_N$, then $Z(L)$ is generated by the holonomy for the Knizhnik-Zamolodchikov connection around the loop $\gamma$ in the base. It is important to make a mental distinction between the algebra $\barAXN$ that has chords based at different points, and the bundle $\barAXN \rightarrow X_N$ whose fibers are chord diagrams based at a same point. We will strive to always make this distinction evident by saying whether we work with the bundle or the algebra itself.\\

To achieve this, we first define the bundle $\overline{\mathcal{A}}(C_N \mathbb{C})$ over $C_N \mathbb{C}$ on which we put the usual Knizhnik-Zamolodchikov connection, define an action of the symmetric group on sections of this bundle leading to the definition of the algebra bundle $C_N \mathbb{C} \times_{S_N \mathbb{C}} \overline{\mathcal{A}}(C_N \mathbb{C}) / S_N$ over $X_N$ with total space $\overline{\mathcal{A}}(X_N)$. The KZ connection on $\overline{\mathcal{A}}(C_N \mathbb{C})$ descends to a well-defined connection on this bundle. Solving the KZ equation exactly, we obtain the holonomy for that connection around a loop $\gamma$ in the base $X_N$. Upon expansion of this holonomy we generate $Z(L)$.\\

\subsection{$\overline{\mathcal{A}}(C_N \mathbb{C})$ as a bundle over $C_N \mathbb{C}$}
We put the product topology on $\mathbb{C}^N$, and the subspace topology on $C_N \mathbb{C}=\mathbb{C}^N-\Delta$. We consider over this space the algebra bundle $(\barACNC,+,\cdot)$ where $\mathcal{A}(C_N \mathbb{C})$ is the $\mathbb{C}$-linear span of chord diagrams with support a same point of $C_N \mathbb{C}$, viewed as a graded algebra over $\mathbb{C}$ with graded completion $\barACNC$. We now put a topology on $\barACNC$. By definition, we have this bundle to be trivial, and its fiber is isomorphic as an algebra to the algebra of formal power series in $N(N-1)/2$ variables:
\beq
\barACNC \simeq C_N \mathbb{C} \times \mathbb{C}[[\{|ij\rangle \;|\; 1 \leq i<j \leq N\}]]
\eeq

We put on this total space the topology defined as the product of the topology on $C_N \mathbb{C}$ with a topology on the algebra $\mathbb{C}[[\{|ij\rangle \;|\; 1 \leq i<j \leq N\}]]$ as we do now. We first start by considering the ring of formal power series $\mathbb{C}[[x]]$ whose elements are of the form $\sum_{n \geq 0} a_n x^n$ and are determined by giving a sequence $(a_n) \in \mathbb{C}^{\omega}$. We define the following metric on this ring; if $f(x)=\sum_{n \geq 0} a_n x^n$ and $g(x)=\sum_{n \geq 0}b_n x^n$, then we consider the metric $d$ induced by the distance function on sequences $(a_n)$ and $(b_n)$ defined by:
\beq
d((a_n),(b_n))=2^{-k}
\eeq
where $k$ is the smallest integer for which $a_n \neq b_n$. Then $f$ and $g$ being defined by the respective sequences $(a_n)$ and $(b_n)$, we define $d(f,g)=2^{-k}$. To show that this defines a metric on the formal power series ring is not difficult, the only tricky point being to show the triangle inequality. If $h(x)=\sum_{n \geq 0}c_n x^n$ is a third power series such that $d((a_n),(c_n))=2^{-i}$ and $d((b_n),(c_n))=2^{-j}$ for some integers $i$ and $j$, then we would like to have $d((a_n),(b_n)) \leq d((a_n),(c_n)) + d((b_n),(c_n))$, that is $2^{-k} \leq 2^{-i} + 2^{-j}$. The only combinations of indices $i,j,k$ that does not lead to this inequality are the two cases $k<i<j$ and $k<j<i$, none of which occur. Indeed if either case is true, then in particular we could have $a_k=c_k$ and $b_k=c_k$, a contradiction with the definition of $k$. With this metric we define a metric topology on the ring of formal power series, thereby making it into a topological space.\\

We now deal with the ring of formal power series in two commuting unknowns $x$ and $y$ denoted by $\mathbb{C}[[x,y]]$ whose elements are of the form $\sum_{m \geq 0} \sum_{m_1 + m_2=m} a_{m_1,m_2}\,x^{m_1}y^{m_2}$. We put on this ring the metric topology with the use of the metric induced by the distance function:
\beq
d((a_{m_1,m_2}),(b_{m_1,m_2}))=2^{-k}
\eeq
where:
\beq
k=\text{inf}\{m_1+m_2 \;|\;a_{m_1,m_2} \neq b_{m_1,m_2}\}
\eeq
As in the case of one variable only, it is clear that this induces a metric on the ring of formal series, and the only difficulty in showing that this does define a metric is the triangle inequality. If we consider a third series determined by a sequence $(c_{m_1,m_2})$ and we have $d((a_{m_1,m_2}),(c_{m_1.m_2}))=2^{-i}$, $d((b_{m_1,m_2}),(c_{m_1.m_2}))=2^{-j}$, then the only combinations of indices that raise problems are the two cases $k<i<j$ and $k<j<i$. For either case, if $m_1$ and $m_2$ are two integers whose sum is equal to $k$, then we could have equalities $a_{m_1,m_2}=c_{m_1,m_2}$ and $b_{m_1,m_2}=c_{m_1,m_2}$, in contradiction with the definition of $k$.\\

It is clear that $\mathbb{C}[[\{|ij\rangle \;|\; 1 \leq i<j \leq N\}]]$, whose elements are of the form:
\beq
\sum_{m \geq 0} \sum_{\sum_{1 \leq i<j \leq N}m_{ij}=m} a_{(m_{ij})}\,\prod_{i<j}(|ij\rangle)^{m_{ij}}
\eeq
can be topologized with the metric induced by the distance function:
\beq
d((a_{(m_{ij})}),(b_{(m_{ij})}))=2^{-k}
\eeq
where:
\beq
k=\text{inf}\{\sum_{1 \leq i<j \leq N} m_{ij} \;|\; a_{(m_{ij})} \neq b_{(m_{ij})}\}
\eeq
\\

\subsection{Connection on $\barACNC$}
Before putting a connection on $\barACNC \rightarrow C_N \mathbb{C}$, we briefly review how a connection is defined on a vector bundle. From first principles, let $E:X\times \mathbb{C}^n$ be an $n$-dimensional vector bundle over some topological space $X$, $\Gamma(E) \simeq \mathcal{C}^{\infty}(X, \mathbb{C}^n)$ its space of sections, $\sigma \in \Gamma(E)$, then $d\sigma$ can be computed using multivariate calculus. For $f$ a function on $X$, we have $d(f.\sigma)=df\otimes\sigma + f.d\sigma$. If $E$ is not trivial however, we have to generalize this notion of differentiation to a $\mathbb{C}$-linear differential operator $\nabla:\Gamma(E) \rightarrow \Gamma(T^*X \otimes E)$ satisfying the Leibniz rule $\nabla(f \sigma)=df \otimes \sigma + f \nabla \sigma$. If $E \stackrel{\pi}{\rightarrow} X$ is a smooth complex vector bundle with local trivializations $E|_U \stackrel{\sim}{\rightarrow} U \times \mathbb{C}^n$ given by a local frame field $e_1,...,e_n$ on U, then any $\sigma \in \Gamma(E|_U)$ can be written locally:
\beq
\sigma(x)=\sum_{1 \leq k \leq n}a_k(x)e_k(x) \simeq (a_1(x),...,a_n(x)) \equiv a(x)
\eeq
for $x \in U$. If there is a connection $\nabla$ on $E$, then we write:
\beq
\nabla e_j=\sum_{1 \leq k \leq n}\omega_{kj}\otimes e_k
\eeq
and we let $\omega:=(\omega_{kj})_{1 \leq k,j \leq n}$. For $\sigma \in \Gamma(E|_U)$ as above, $\sigma \simeq a$, then we have:
\begin{align}
\nabla \sigma&= \nabla(\sum_j a_j e_j) \\
&=\sum_j\nabla(a_je_j) \\
&=\underbrace{\sum_j(da_j)\otimes e_j}_{da} + \sum_j a_j\!\!\!\!\!\underbrace{\nabla e_j}_{\sum_{1 \leq k \leq n}\omega_{kj}\otimes e_k} \\
& \nonumber \\
&=da + \sum_{jk} a_j \omega_{kj} e_k \\
&=da + \underbrace{\sum_k (\omega a )_k e_k}_{\omega a} \\
&=(d+\omega)a
\end{align}
Chen ~\cite{ChI} generalized this to the case of $T^*X \otimes A$ over some base space $X$, where $T^*X$ is the cotangent bundle of $X$, $A$ is some graded, non-commutative algebra. For our purposes, $A=\overline{\mathcal{A}}(C_N \mathbb{C})$, and we put on $\overline{\mathcal{A}}(C_N \mathbb{C})$ the Knizhnik-Zamolodchikov connection $\nabla=d-\omega$ where:
\beq
\omega=\frac{1}{2 \pi i}\sum_{1 \leq i\neq j \leq N}\frac{|ij\rangle}{z_i-z_j}dz_i
\eeq
the well-known KZ connection form. This gives a well-defined connection on $\overline{\mathcal{A}}(C_N \mathbb{C})$; it is immediate that both linearity and the Leibniz rule hold for this choice of connection.\\ \\

\subsection{Construction of the bundle $\barAXN$ over $X_N$} \label{constr}
We first define an action of the permutation group $S_N$ on sections of $\barACNC$. Let $Z \in \CNC$ and $\psi \in \Gamma(\barACNC)$. For some element $\sigma \in S_N$, we would like $(\sigma \psi)(\sigma Z)=\psi (Z)$ for consistency, where $\sigma Z=\sigma (z_1,...,z_N)=(z_{\sigma(1)},...,z_{\sigma(N)})$. Write $\psi(Z) = \sum_P \lambda_P |P\rangle(Z)$, which is short for $(Z,\sum_P \lambda_P |P\rangle)$, where the sum is over all chord diagrams $|P\rangle$ with support on $Z$, and $\lambda_P \in \mathbb{C}$ for all $P$. Then we must have $\sigma \psi (\sigma Z)=\sum_P \lambda_P |\sigma P\rangle ( \sigma Z)$, where if $P=(P_1,...,P_m)$, then we define $\sigma P:=(\sigma P_1,...,\sigma P_m)$, with $\sigma |kl\rangle=|\sigma k \sigma l\rangle$, $1 \leq k,l \leq N$. Thus we define:
\beq
\sigma \psi = \sum_P \lambda _P |\sigma P \rangle
\eeq
Consider the set of equivalence classes $\CNC \times_{S_N \mathbb{C}} \barACNC / S_N$, whose elements are classes $[(Z, \psi(Z))]=\{(\sigma Z,\sigma \psi (\sigma Z)) \;|\;\sigma \in S_N, \;Z \in C_N \mathbb{C},\; \psi(Z) \in \barA(Z)\}$. This defines an algebra bundle by projecting on the second factor:
\beq
\begin{CD}
 \barAXN \\
 @V \pi VV \\
  X_N
  \end{CD}
 \eeq
 \\

We now give a description of this bundle. Fix some $Z \in X_N$. If $q$ is the quotient map from $C_N \mathbb{C}$ to $X_N$, then for some $\tilde{Z} \in q^{-1}(Z)$, a choice of some element $\sum \lambda_P |P\rangle (\tilde{Z}) \in \overline{\mathcal{A}}(\tilde{Z})$ together with the point $\tilde{Z}$ defines a representative $(\tilde{Z}, \sum \lambda_P |P\rangle (\tilde{Z}))$ of some class in $C_N \mathbb{C} \times_{S_N \mathbb{C}} \barACNC / S_N$. Now $|P\rangle (\tilde{Z}) \in \mathcal{A}(C_N \mathbb{C})$ only. However projecting $\tilde{Z}$ to $Z$ those two coordinates of $\tilde{Z}$ that are marked by a chord by $|P\rangle$ are also marked in the complex plane after projection, and thus defines a chord diagram that we still denote by $|P\rangle(Z)$ for convenience. Note however that if $|P\rangle (\tilde{Z})$ can be defined using indices, $|P\rangle (Z)$ is index-free. Further the definition of $|P\rangle (Z)$ is independent of the representative chosen.

\begin{labeling}
Fix N=3, $(e_i)_{1 \leq i \leq 3}$ the canonical basis for $\mathbb{C}^3$, $\tilde{Z}=(z_1,z_2,z_3) \in C_3 \mathbb{C}$, $|12\rangle(\tilde{Z})$ projects to the chord diagram $|Q\rangle$ at $Z=q \tilde{Z} \in X_3$:\\
\setlength{\unitlength}{0.5cm}
\begin{picture}(12,16)(-9,0)
\put(0,0){\line(2,3){3.2}}
\put(0,0){\line(1,0){10}}
\put(3,2){\circle*{0.2}}
\put(5,1){\circle*{0.2}}
\put(8,4){\circle*{0.2}}
\put(3,2.5){$z_3$}
\put(4,1){$z_2$}
\put(8.5,4){$z_1$}
\multiput(5,1)(0.2,0.2){16}{\circle*{0.1}}
\put(7,2){$|Q\rangle(Z)$}
\put(6,8){\line(0,-1){4}}
\put(6,5){\vector(0,-1){1}}
\put(6.5,6){$\pi$}
\put(6,11){\line(1,0){4}}
\put(6,11){\line(0,1){4}}
\put(6,11){\line(-1,-1){3.2}}
\put(10.5,11){$e_2$}
\put(2.5,7){$e_1$}
\put(6.5,15){$e_3$}
\put(4,9){\circle*{0.2}}
\put(9,11){\circle*{0.2}}
\put(6,12){\circle*{0.2}}
\put(3.5,10){$z_1$}
\put(8.5,12){$z_2$}
\put(5,12){$z_3$}
\put(6.5,9){$|12\rangle(\tilde{Z})$}
\multiput(4,9)(0.2,0.08){26}{\circle*{0.1}}
\end{picture}\\ \\
with $Z=\{z_1,z_2,z_3\} \in X_3$, and the chord $|Q\rangle$ along with the point in $X_3$ are represented by:\\
\setlength{\unitlength}{0.5cm}
\begin{picture}(12,6)(-9,0)
\put(0,0){\line(2,3){3.2}}
\put(0,0){\line(1,0){10}}
\put(3,2){\circle*{0.2}}
\put(5,1){\circle*{0.2}}
\put(8,4){\circle*{0.2}}
\multiput(5,1)(0.2,0.2){16}{\circle*{0.1}}
\put(7,2){$|Q\rangle(Z)$}
\end{picture}\\ \\
Conversely, given $Z$ as above along with the chord $|Q\rangle$, pick a labeling of the 3 defining points of $Z$:\\
\setlength{\unitlength}{0.5cm}
\begin{picture}(12,6)(-9,0)
\put(0,0){\line(2,3){3.2}}
\put(0,0){\line(1,0){10}}
\put(3,2){\circle*{0.2}}
\put(5,1){\circle*{0.2}}
\put(8,4){\circle*{0.2}}
\put(3,2.5){$z_1$}
\put(4,1){$z_3$}
\put(8.5,4){$z_2$}
\multiput(5,1)(0.2,0.2){16}{\circle*{0.1}}
\put(7,2){$|Q\rangle(Z)$}
\end{picture}\\ \\
Then for $\tilde{Z}=(z_1,z_2,z_3) \in q^{-1}Z$, we have $|Q\rangle (\tilde{Z})=|23\rangle (\tilde{Z})$.
\end{labeling}

Note that $\sum_Q \lambda_Q |Q\rangle \in \mathbb{C}[[\{|P\rangle \}]]$, the power series algebra on $N(N-1)/2$ generators which are the $N(N-1)/2$ possible chords of order 1 on a given point of $X_N$. Thus the fiber of $\barAXN$ over $X_N$ is isomorphic to $\mathbb{C}[[\{|P\rangle \;|\;|P|=1\}]]$ and inherits its automorphism group as structure group: $Aut(\barAXN)=Aut(\mathbb{C}[[\{|P\rangle \}]])$.\\

\subsection{KZ connection on $\barAXN$}
Recall that we had the KZ connection $\nabla =d-\omega$ on $\barACNC$ over $\CNC$, with:
\beq
\omega=\frac{1}{2 \pi i}\sum_{1 \leq i \neq j \leq N}\frac{|ij\rangle}{z_i-z_j}dz_i
\eeq
We show it is invariant under the action of the symmetric group $S_N$, and therefore descends to a connection on $\barAXN$:
\begin{desc}
The KZ connection on $\barACNC$ over $\CNC$ descends to a well defined connection $\nabla=d-\omega$ on $\barAXN$ over $X_N$, with:
\beq
\omega=\frac{1}{2 \pi i}\sum_{|P|=1}|P\rangle.\dlogp
\eeq
\end{desc}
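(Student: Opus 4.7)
The plan is to verify that the KZ one-form on $\barACNC$ is $S_N$-equivariant and therefore descends to a well-defined one-form on the quotient bundle $\barAXN\to X_N$. Once equivariance is in hand, $\mathbb{C}$-linearity and the Leibniz rule for $\nabla=d-\omega$ are inherited from the corresponding properties of the KZ connection on $\barACNC$, so the only substantive computation is checking invariance under the $S_N$-action.

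As a preliminary, I would recast $\omega$ in a symmetric form. Pairing the $(i,j)$ and $(j,i)$ terms in $\frac{1}{2\pi i}\sum_{i\neq j}\frac{|ij\rangle}{z_i-z_j}\,dz_i$ and using $|ij\rangle=|ji\rangle$ (chord diagrams are unordered pairs), the sum collapses to
\beq
\omega = \frac{1}{2\pi i}\sum_{1\leq i<j\leq N}|ij\rangle\,d\log(z_i-z_j).
\eeq
This is much more convenient for the invariance check, and matches the claimed expression once one reads a length-one pairing $P=(i,j)$ as an unordered pair $\{i,j\}$.

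The main step is then to compute the pullback of $\omega$ along $\sigma\in S_N$ acting on $\CNC$ by $\sigma Z=(z_{\sigma(1)},\ldots,z_{\sigma(N)})$. Since $\sigma^*z_i=z_{\sigma(i)}$, I obtain $\sigma^*\omega = \frac{1}{2\pi i}\sum_{i<j}|ij\rangle\,d\log(z_{\sigma(i)}-z_{\sigma(j)})$. Reindexing via $a=\sigma(i)$, $b=\sigma(j)$ and using the fact that $d\log$ is insensitive to an overall sign (so $d\log(z_b-z_a)=d\log(z_a-z_b)$), the sum reorganizes over unordered pairs $a<b$ with chord coefficient $|\sigma^{-1}(a)\sigma^{-1}(b)\rangle$. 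This is precisely $\sigma^{-1}\omega$ under the chord-relabeling action defined in the preceding subsection, so $\sigma\cdot\sigma^*\omega = \omega$: the form is $S_N$-equivariant and therefore descends.

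On $X_N$ there is no canonical labeling of defining points, but the expression $\frac{1}{2\pi i}\sum_{|P|=1}|P\rangle\,\dlogp$ is intrinsically well-defined, since a chord $|P\rangle$ at $Z\in X_N$ is an unordered pair of defining points and $\vartriangle\!z[P]$ is determined up to a sign that $d\log$ kills. Linearity and the Leibniz rule descend automatically since $S_N$-invariant functions on $\CNC$ are exactly pullbacks of smooth functions on $X_N$. The main obstacle is purely bookkeeping: keeping the pullback on the base separate from the induced action on chord labels and ensuring that the reindexing produces exactly one term per unordered pair; the symmetric rewriting in the first step is what makes the cancellations line up cleanly.
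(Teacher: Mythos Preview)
Your proposal is correct and follows essentially the same approach as the paper: both rewrite the KZ form using $|ij\rangle=|ji\rangle$ into the symmetric expression $\frac{1}{2\pi i}\sum_{i<j}|ij\rangle\,d\log(z_i-z_j)$ and then observe that this is $S_N$-invariant. The only difference is that you spell out the equivariance check via an explicit pullback-and-reindex computation, whereas the paper simply asserts invariance once the symmetric form is reached.
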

\begin{proof}
It suffices to observe that we can write:
\begin{align}
\sum_{i \neq j} \frac{|ij\rangle}{z_i-z_j}dz_i &=\sum_i (\sum_{j>i}\frac{|ij\rangle}{z_i-z_j}dz_i + \sum_{j<i} \frac{|ij\rangle}{z_i-z_j}dz_i) \\
&=\sum_{i<j}\frac{|ij\rangle}{z_i-z_j}dz_i - \sum_{j<i}\frac{|ji\rangle}{z_j-z_i}dz_i \label{pr1}\\
&=\sum_{i<j}\frac{|ij\rangle}{z_i-z_j}dz_i-\sum_{i<j}\frac{|ij\rangle}{z_i-z_j}dz_j \label{pr2}\\
&=\sum_{i<j}\frac{|ij\rangle}{z_i-z_j}(dz_i-dz_j) \\
&=\sum_{1 \leq i<j \leq N}|ij\rangle.\dlog(z_i-z_j)
\end{align}
where in going from ~\eqref{pr1} to ~\eqref{pr2} we used the fact that for all indices $i \neq j$, we have $|ij\rangle=|ji\rangle$. From this we can write the KZ connection $\nabla=d-\omega$ on $\barACNC$ over $\CNC$ as $\omega=(1/2 \pi i)\sum_{|P|=1}|P\rangle.\dlogp$, which is invariant under the action of $S_N$.
It follows that the KZ connection on $\barACNC$ descends to a well-defined connection $\nabla=d - \omega$ on $\barAXN$ over $X_N$, with $\omega=(1/2 \pi i)\sum_{|P|=1}|P\rangle.\dlog \vartriangle\!\! z[P]$.
\end{proof}

\subsection{subtleties of $TX_N$ and $T^*X_N$}
If $\omega$ as defined in the previous section unambiguously makes sense by virtue of the presence of chords in its definition, the tangent and cotangent bundles of $X_N$ are not well-defined however. Points in $X_N$ can be written $\sum_{1 \leq i \leq N}[z_i]$ where the indexing $[z_i]$ on the $N$ defining points of $Z$ is nothing but a choice of labeling. If nearby points can be chosen to have the same labeling, this is not necessarily true of points far away from $Z$. To remedy this we endow connected open sets $U=\{U_1, \cdots, U_N \}$ - all $U_i$'s being connected - with a property that we refer to as the "small enough" property. This means that points in small enough connected open sets have their $N$ defining points with a same choice of labeling, and points in disjoint connected open sets may not necessarily have defining points with a same choice of labeling. Both the tangent and the cotangent bundles of $X_N$ have local trivializations defined over small enough, connected open sets, for in this case each defining open set for such small enough connected open set is connected, and complex variables carry a same choice of labeling, whence the possibility of defining local complex variables in those. From there, tangent vectors can be locally defined, as well as their duals. It is in this sense that $TX_N$ and $T^*X_N$ can be defined.

\subsection{The KZ equation}
Let $\xi$ be a section of $\barAXN$:
\beq
\xi=\sum_P \lambda_P|P\rangle
\eeq
Let $Z \in X_N$, $\gamma$ a loop in $X_N$ based at $Z$. The parallel transport of $\xi$ along $\gamma$ is denoted by $\Psi$. It is a horizontal section for the connection, so that $\nabla \Psi=0$ from which it follows that:
\begin{align}
d\Psi &=\omega \Psi \\
&=\frac{1}{2 \pi i}\sum_{|P|=1} |P\rangle.\dlogp \cdot \Psi
\end{align}
and using the fact that the fiber of $\barAXN$ is isomorphic to $\mathbb{C}[[\{|P\rangle\}]]$, a commutative algebra, this equation has the following solution:
\begin{align}
\Psi &=e^{(1/2 \pi i)\sum_P |P\rangle.\log \vartriangle z[P] + \Theta} \\
&=e^{\Theta}\cdot e^{(1/2 \pi i)\sum_P |P\rangle.\log \vartriangle z[P]} \\
&=\mu e^{(1/2 \pi i)\sum_P |P\rangle.\log \vartriangle z[P]}
\end{align}
where $\Theta \in \barA$ is a constant term determined by the initial condition $
\xi(Z)=\Psi(Z)$ and we have put $\mu \equiv \exp(\Theta)$. Solving for the initial condition:
\beq
\Psi(Z)=\mu e^{(1/2 \pi i)\sum_P |P\rangle.\log \vartriangle z[P](Z)}=\xi(Z)
\eeq
using the notation:
\beq
\Lambda \equiv \frac{1}{2 \pi i}\sum_{|P|=1} |P\rangle .\log \vartriangle \!\!z[P]
\eeq
then we must have:
\beq
\mu=e^{-\Lambda(Z)} \cdot \xi(Z)
\eeq

\subsection{Holonomy of the KZ equation}
With the same notations and definitions as in the previous subsection, we have:
\begin{align}
\Psi(\gamma(1))&=\mu.e^{\Lambda(\gamma(1))} \\
&=\mu.e^{\int_I \omega(\gamma) + \Lambda(\gamma(0))} \\
&=e^{\int_I \omega(\gamma)}\cdot \mu \cdot e^{\Lambda(\gamma(0))} \\
&=e^{\int_I \omega (\gamma)}.\Psi(\gamma(0))
\end{align}
and the holonomy of the KZ connection along a loop $\gamma$ in the base is expressed as $\exp (\int_I \omega(\gamma))$.
\begin{holo}
For a link $L$ that once put in braid position is given by a geometric braid $\tilde{\gamma} \in \mathbb{C}\times I$ corresponding to the lift of some loop $\gamma \in X_N$, the holonomy $h$ of the KZ connection on $\barAXN$ over $XN$ along $\gamma$ generates $Z(L)$, the Kontsevich integral for the link $L$.
\end{holo}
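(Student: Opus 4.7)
The preceding subsection already gave the closed form $h = \exp\!\bigl(\int_I \omega(\gamma)\bigr)$ for the holonomy. The strategy is to expand this exponential as a formal power series in the commutative fiber $\mathbb{C}[[\{|P\rangle : |P|=1\}]]$ of $\barAXN$, apply the standard symmetrization trick for commuting iterated integrals, and match the resulting series term by term with the Kontsevich formula \eqref{IK} for $Z(L)$ in its braid form.

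First I pull $\omega$ back along $\gamma$. Writing $(\gamma_1(t),\ldots,\gamma_N(t))$ for a lift of $\gamma$, Lemma~\ref{desc} gives
\[
\gamma^{*}\omega \;=\; \frac{1}{2\pi i}\sum_{|P|=1} |P\rangle\,\dlog\vartriangle\!\!z[P](\gamma(t)),
\]
a $1$-form on $I$ valued in the commutative fiber. Because all the generators $|P\rangle$ commute there, the exponential series is an ordinary one:
\[
h \;=\; \sum_{n \geq 0} \frac{1}{n!}\Bigl(\int_I \gamma^{*}\omega\Bigr)^{n}.
\]
Using the familiar commutative identity $\bigl(\int_0^1 f(t)\,dt\bigr)^{n} = n!\int_{0<t_1<\cdots<t_n<1} f(t_1)\cdots f(t_n)\,dt_1\cdots dt_n$ and expanding each factor in pairings, the $n$-th term becomes
\[
\frac{1}{(2\pi i)^{n}} \int_{0<t_1<\cdots<t_n<1} \sum_{P=(P_1,\ldots,P_n)} |P_1\rangle\cdots |P_n\rangle \prod_{1\leq i \leq n} \dlog\vartriangle\!\!z[P_i](\gamma(t_i)),
\]
which already exhibits exactly the shape of the $n$-th summand of \eqref{IK}.

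It remains to match chord-diagram coefficients. Each $|P_i\rangle$ is a single chord between two of the $N$ defining points of the base point of $\gamma$ in $X_N$; when placed at height $t_i$ on the geometric braid $\tilde\gamma$, and pulled back to $\coprod_q S^1$ under the embedding defining the closed link $L$, it produces precisely a chord contributing to $D_P$. The commutative product $|P_1\rangle\cdots|P_n\rangle$ in the fiber then matches the link-supported chord diagram $D_P$ obtained by stacking chords in the order $t_1<\cdots<t_n$. Finally, because every strand of a braid is oriented upward, $\varepsilon(P)=0$ for every pairing $P$, so the sign $(-1)^{\varepsilon(P)}$ in \eqref{IK} disappears. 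Combining these identifications yields $h=Z(L)$ degree by degree.

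\textbf{Main obstacle.} The delicate point is the last identification: the fiber $\barA(Z)$ over a single point $Z\in X_N$ records no height information, yet the corresponding summand in \eqref{IK} involves chords ordered along the braid by $t_1<\cdots<t_n$. Justifying that the commutative product $|P_1\rangle\cdots|P_n\rangle$ in $\barA(Z)$ correctly represents the stacked chord diagram $D_P$ on $\coprod_q S^1$ requires invoking that the $\ess$-concatenation on a closed braid descends, modulo the 4-T and framing-independence relations, to a commutative product on $\barA(\coprod_q S^1)$, so that the simplex ordering of the $t_i$'s and the absence of order in the fiber are compatible. Once that compatibility is established, the match of the two series is straightforward bookkeeping.
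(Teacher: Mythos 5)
Your analytic core coincides with the paper's: both expand $h=\exp(\int_I\omega(\gamma))$ as an ordinary exponential in a commutative algebra and use the identity $\bigl(\int_0^1 f\bigr)^m = m!\int_{\Delta_m} f(t_1)\cdots f(t_m)$ to cancel the $1/m!$ and land on the simplex-ordered iterated integral that matches the shape of \eqref{IK}. The divergence --- and the gap --- is in the step you yourself flag as the ``main obstacle.'' You place the product $|P_1\rangle\cdots|P_n\rangle$ in the fiber $\mathbb{C}[[\{|P\rangle\}]]$ over the basepoint and then propose to recover $D_P$ by arguing that the $\ess$-concatenation descends to a commutative product on $\barA(\coprod_q S^1)$. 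That resolution does not work: the fiber product only remembers the \emph{multiset} of strand-pairs, whereas $D_P$ depends on how the chords \emph{interleave} once placed at heights $t_1<\cdots<t_n$ on $\tilde{\gamma}$ and pulled back under $\iota_L$. Two pairings with the same multiset of entries but different height-orderings generally yield non-isomorphic chord diagrams on $\coprod_q S^1$ (this is exactly the non-commutativity of $\ess$ illustrated by the paper's example $D_1\ess D_2\neq D_2\ess D_1$ in Section 3); commutativity of the connected-sum product on $\barA(\coprod_q S^1)$ is a different statement and does not make the ordering immaterial. So as written, your identification of the $n$-th coefficient with $D_P$ loses precisely the data it needs.

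The paper avoids this by never collapsing into a single fiber: each factor $|P_i\rangle(\gamma t_i)$ is based at the point $\gamma(t_i)$, so the product is an element of the algebra $\barAXN$ proper rather than of one fiber, and the positional data is carried along. From the collection of chord supports one recovers $\tilde{\gamma}$, lifts $h$ to $\tilde{h}$ with tangle chord diagrams $|P\rangle(\tilde{\gamma})(T)$, explicitly constructs the closure $\overline{\tilde{\gamma}}$ to get $\overline{\tilde{h}}$, and only then applies the moduli map $\tau:L_P\mapsto D_{[P]}=\iota_L^{-1}L_P$ to land in $\barA(\coprod S^1)$. You should also note that your proof skips the closure step entirely: the extra chords created by closing the braid (long chords, chords on the top and bottom caps) are what force the conclusion to read ``generates $Z(L)$'' rather than ``equals $Z(L)$,'' and some account of them is needed to finish the argument. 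Your observation that $\varepsilon(P)=0$ on an upward-oriented braid is correct and is a useful point the paper leaves implicit, but it does not survive unmodified once the closure strands (which are oriented downward) are glued in.
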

\begin{proof}
It suffices to write:
\begin{align}
h \equiv e^{\int_I \omega(\gamma)} &=\sum_{m \geq 0}\frac{1}{m!}(\int_I \omega)^m \\
&=\sum_{m \geq 0} \frac{1}{m!}\int_{I^m} \omega^m
\end{align}
with:
\beq
\omega^m(t_1,...,t_m)=\frac{1}{(2 \pi i)^m}\sum_{P_1,...,P_m}\prod_{1 \leq i \leq m}|P_i\rangle(\gamma t_i)\dlog \vartriangle \!\!z[P_i](\gamma t_i) \label{alg}
\eeq
It is worthwile to point out that though each $|P_i\rangle (\gamma t_i)$ is an element of the fiber of $\barAXN$ over the point $\gamma (t_i)$, $1 \leq i \leq m$, the product of such elements as in ~\eqref{alg} is no longer an element of a fiber of $\barAXN$ but is truly an element of the algebra $\barAXN$ proper. We can write the holonomy $h$ as:
\begin{align}
h&=\sum_{m \geq 0} \frac{1}{m!}\frac{1}{(2 \pi i)^m}\sum_{P_1,...,P_m} \int_{I^m}\prod_{1 \leq i \leq m}|P_i\rangle(\gamma t_i)\dlog \vartriangle \!\!z[P_i](\gamma t_i)\\
&=\sum_{m \geq 0} \frac{1}{m!}\frac{1}{(2 \pi i)^m}\sum_{P_1,...,P_m} \sum_{\sigma \in S_m} \int_{\Delta_m \ni (t_1,...,t_m )} \prod_{1 \leq i \leq m}|P_{\sigma(i)}\rangle(\gamma t_i)\dlog \vartriangle \!\!z[P_{\sigma(i)}](\gamma t_i) \label{delm}\\
&=\sum_{m \geq 0} \frac{1}{m!}  \frac{1}{(2 \pi i)^m}\sum_{\sigma \in S_m} \int_{\Delta_m \ni (t_1,...,t_m )}\sum_{P_1,...,P_m} \prod_{1 \leq i \leq m}|P_{\sigma(i)}\rangle(\gamma t_i)\dlog \vartriangle \!\!z[P_{\sigma(i)}](\gamma t_i) \\
&=\sum_{m \geq 0} \frac{1}{m!} \frac{1}{(2 \pi i)^m} \sum_{\sigma \in S_m} \int_{\Delta_m \ni (t_1,...,t_m )}\sum_{P_1,...,P_m} \prod_{1 \leq i \leq m}|P_i\rangle(\gamma t_i)\dlog \vartriangle \!\!z[P_i](\gamma t_i) \\
&=\sum_{m \geq 0} \frac{1}{m!}  \frac{1}{(2 \pi i)^m} m!\int_{\Delta_m \ni (t_1,...,t_m )}\sum_{P_1,...,P_m} \prod_{1 \leq i \leq m}|P_i\rangle(\gamma t_i)\dlog \vartriangle \!\!z[P_i](\gamma t_i)
\end{align}
where in ~\eqref{delm} we have used the notation $\Delta_m=\{0 \leq t_1 < \cdots < t_m \leq 1 \}$. Observe that we have obtained the formal sum of elements of $\overline{\mathcal{A}}(X_N)$, based at points along $\gamma \in \Omega X_N$. In other terms we have all possible chords based at points of $\gamma$. This is sufficient to lift $h$ to $\tilde{h}$, which is the same as $h$ except that its tangle chord diagrams are of the form $|P\rangle (\tilde{\gamma}t_i)$. In doing so, crossings in $X_N$ may lift to positive or negative crossings, which are immaterial once we close the braid $\tilde{\gamma}$ into $L$. Further, from the collection of chord supports, we recover $\tilde{\gamma}$, as the chords trace out sheets whose boundaries are the $N$ strands of $\tilde{\gamma}$.\\

If we write $P=(P_1,...,P_m)$ and $T=(t_1, \cdots, t_m)$, then we can write:
\beq
\prod_{1 \leq i \leq m} |P_i\rangle (\tilde{\gamma}t_i) =
|P\rangle (\tilde{\gamma})(T) \in \mathcal{A}(\tilde{\gamma})
\eeq
Then:
\begin{align}
\tilde{h}&=\sum_{ m\geq 0} \sum_{|P|=m}\frac{1}{(2 \pi i)^m}\int_{\Delta_m \ni T}|P\rangle(\tilde{\gamma})(T)\text{d}^m \log \vartriangle\!\!z[P](\tilde{\gamma})(T) \\
&=\sum_{ m\geq 0} \sum_{|P|=m}\frac{1}{(2 \pi i)^m}\int_{\tilde{\gamma},\Delta_m \ni T}|P\rangle(T)\text{d}^m \log \vartriangle\!\!z[P](T)
\end{align}
where in the second step we have omitted the mention of $\tilde{\gamma}$ in the integrand for notation's sake and have instead put it under the integral sign to emphasize that we integrate along $\tilde{\gamma}$. Construct the closure $\overline{\tilde{\gamma}}$ of the geometric braid $\tilde{\gamma}$ as follows: if the strands of $\tilde{\gamma}$ originate and end at $\{(n,0,0)\;|\;0 \leq n \leq N-1 \}$ and $\{(n,0,1)\;|\; 0 \leq n \leq N-1\}$ respectively in $\mathbb{R}^3$, then glue to these endpoints the parallel strands $\{\{n\}\times I \times \{0\}\;|\; 0 \leq n \leq N-1 \}$ at time $t=0$ and $\{\{n\}\times I \times \{1\}\;|\; 0 \leq n \leq N-1 \}$ at time $t=1$. Finally we glue in the back $N$ parallel strands $\{\{n\}\times \{1\}\times I\;|\; 0 \leq n \leq N-1 \}$ and smooth corners to get a Morse knot. In this manner, we recover the Kontsevich integral as initially defined by Kontsevich \cite{K}, since after smoothing corners $\overline{\tilde{\gamma}}$ yields $L$. We denote the resulting object by $\overline{\tilde{h}}$.
\begin{align}
\overline{\tilde{h}}&=\sum_{ m\geq 0} \sum_{|P|=m}\frac{1}{(2 \pi i)^m}\int_{\overline{\tilde{\gamma}},\Delta_m \ni T}|P\rangle(T)\text{d}^m \log \vartriangle\!\!z[P](T)\\
&=\sum_{m \geq 0} \sum_{|P|=m} \frac{1}{(2 \pi i)^m}\int_{ \Delta_m \ni T} L_P \text{d}^m \log \vartriangle\!\!z[P](T)
\end{align}
the form of the Kontsevich integral of links that is expressed as a sum of tangle chord diagrams and is mostly used to take advantage of the multiplicative property of $Z$.
\end{proof}
In this integral, there are additional chords between strands in the back of the closure, whose strands are all parallel and therefore evaluate to zero. There are also chords between strands of the braid and strands from the back of the closure. Such chords are called long chords and the integrals corresponding to such chords evaluate to zero \cite{ChDu}. However, chords from the top and bottom portions of the closure have a non-zero contribution. They can easily be calculated and their contribution be grafted onto $\tilde{h}$ to yield $\overline{\tilde{h}}$. It is not true however to say that we can divide out these contributions from $Z(L)$ to obtain $\tilde{h}$. This is why we say that the Kontsevich integral is generated by a holonomy rather than saying that the Kontsevich integral is a holonomy in the classical sense.\\

We would like now to relate $\overline{\tilde{h}}$ to $Z(L)$ as an element of $\barA(\coprod S^1)$.  Different tangle chord diagrams of the same degree supported on the same link $L$ will be defined to be equivalent if one can go from one pairing to the other by circling $L$. This is a well-defined equivalence relation. If $P$ is a representative of such a class, we denote the class by $[P]$. \\

Now for $m$ fixed, $P=(P_1,...,P_m)$ fixed, from $L_P$ we can determine the chord diagram on $\coprod S^1$ it corresponds to under the embedding $\iota_L$ defining the link $L$: $\iota_L^{-1} L_P=D_{[P]}$, thereby defining a moduli map:
\beq
\tau: \overline{\mathcal{A}}(\mathcal{L}inks) \rightarrow \overline{\mathcal{A}}(\amalg S^1)
\eeq
that we extend by linearity. Then:
\begin{align}
\tau \overline{\tilde{h}}&=\tau \sum_{m \geq 0} \sum_{|P|=m} \frac{1}{(2 \pi i)^m} \int_{L, \Delta_m \ni T} L_P \text{d}^m \log \vartriangle \!\!z[P](T) \\
&=\sum_{m \geq 0} \sum_{|P|=m} \frac{1}{(2 \pi i)^m} \int_{L, \Delta_m \ni T} \tau L_P \text{d}^m \log \vartriangle \!\!z[P](T) \\
&=\sum_{m \geq 0} \sum_{|P|=m} \frac{1}{(2 \pi i)^m} \int_{L, \Delta_m \ni T} D_{[P]} \text{d}^m \log \vartriangle \!\!z[P](T) \\
&=Z(L)
\end{align}
which can be written:
\beq
Z(L)=\sum_{m \geq 0} \sum_{\substack{ [P] \\ |P|=m}} D_{[P]} \cdot \left( \frac{1}{(2 \pi i)^m} \sum_{P' \in [P]} \int_{L,\Delta_m \ni T} \text{d}^m \log \vartriangle \!\! z[P'](T) \right)
\eeq

\section{Construction of $\nabla$ from first principles}
In Section \ref{constr}  we showed $\barAXN \rightarrow X_N$ was isomorphic to $C_N \mathbb{C} \times_{S_N \mathbb{C}} \barACNC / S_N \rightarrow X_N$. In this section, we will construct the bundle $\barAXN$ over $X_N$ from first principles as well as the KZ connection $\nabla =d-\omega$ on it. Doing so, it will be manifest why the KZ connection is the natural one on a certain principal bundle from which we will construct $\barAXN \rightarrow X_N$ using the associated bundle construction. We mainly follow ~\cite{KoNom}, ~\cite{Nak} applied to our situation.

\subsection{The principal bundle $P(X_N, G)$}
The structure group $G$ for the principal bundle that we will introduce is the group $(exp\mathcal{A}^{(1)}(X_N), \cdot)$ where elements of $\mathcal{A}^{(1)}(X_N)$ are complex linear combinations of chord diagrams based at some fixed point of $X_N$ of the form $\sum_{|P|=1} \lambda_P |P\rangle$ for some complex coefficients $\lambda_P$, and $\cdot$ denotes the usual associative product of exponentials:
\beq
e^{\sum_P \lambda_P |P\rangle }\cdot e^{\sum_Q \mu_Q  |Q\rangle }=e^{\sum_P (\lambda_P + \mu_P  )|P\rangle }
\eeq
We have:
\beq
(e^{\sum_P \lambda_P |P\rangle })^{-1}=e^{-\sum_P \lambda_P |P\rangle }
\eeq
and the unit for the product is $\exp(\sum_P 0\cdot |P\rangle)=\exp(0)=1$. $G$ is a commutative group with this multiplication. We mainly follow ~\cite{NSt} in the construction of our principal bundle. We first make $G$ into a Lie group. It suffices to make $G$ into a differentiable manifold on which the product and inverse maps are differentiable. For the differentiable manifold structure, consider the following map:
\begin{align}
G &\stackrel{e}{\leftarrow} \mathbb{C}^{N(N-1)/2} \nonumber \\
e^{\sum_P \lambda_P |P\rangle} & \dashleftarrow \sum_P \lambda_P |P\rangle
\end{align}
where $\{|P\rangle\;|\;|P|=1\}$ forms a basis of $\mathbb{C}^{N(N-1)/2}$.

We now put a topology on $G$. Let $m=N(N-1)/2$. On $\mathbb{C}^m$ we put the product topology, which we denote by $\tau_m$. We can write any open set $U \in \tau_m$ as the union of open sets of the form $U_1 \times \cdots \times U_m$. For $m$ complex numbers $\lambda_i \in U_i ^{open} \subset \mathbb{C}$, $1 \leq i \leq m$, we define the following open subset of $G$:
\beq
eU_1 \times \cdots \times U_m := \{e^{\sum_i \lambda_i |P_i\rangle}\; | \; \lambda_i \in U_i,\; 1 \leq i \leq m\}
\eeq
Write $\Lambda=\sum_P \lambda_P |P\rangle$.
For simplicity we write $eU=\{e^{\Lambda}\;|\;\Lambda \in U \}$, $U=U_1 \times \cdots \times U_m$ and we have $\Lambda \in U$ if and only if $\lambda_i \in U_i$ for $ 1 \leq i \leq m$.
We denote by $\tau_G$ the collection of all such subsets of $G$.\\
\begin{topG}
$\tau_G$ defines a topology on $G$.
\end{topG}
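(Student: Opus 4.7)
The plan is to exploit the bijection $e : \mathcal{A}^{(1)}(X_N) \to G$ given by $\Lambda \mapsto e^{\Lambda}$ in order to transport the product topology $\tau_m$ on $\mathbb{C}^{m} \cong \mathcal{A}^{(1)}(X_N)$, $m = N(N-1)/2$, over to $G$. Once this bijection is established, the three topology axioms on $\tau_G$ become a direct consequence of the same axioms on $\tau_m$.

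First I would verify that $e$ is a bijection. Surjectivity is immediate from the very definition $G = \exp \mathcal{A}^{(1)}(X_N)$. For injectivity, note that $\mathcal{A}^{(1)}(X_N)$ is a degree-one subspace of $\overline{\mathcal{A}}(X_N)$, so the expansion
$$e^{\Lambda} = 1 + \Lambda + \tfrac{1}{2}\Lambda^{2} + \cdots$$
has $\Lambda$ as its degree-one component. Hence $\Lambda$ is recovered uniquely from $e^{\Lambda}$, and $e^{\Lambda} = e^{\Lambda'}$ forces $\Lambda = \Lambda'$.

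Next I would check the three defining axioms of a topology for $\tau_G$. Because $e$ is a bijection, set-theoretic operations commute with $e$:
$$e\Bigl(\bigcup_{\alpha} U_{\alpha}\Bigr) = \bigcup_{\alpha} e(U_{\alpha}), \qquad e(U \cap V) = e(U) \cap e(V),$$
together with $\emptyset = e(\emptyset)$ and $G = e(\mathbb{C}^{m})$. Each axiom on $\tau_G$ is therefore inherited from the corresponding axiom on $\tau_m$, which is a topology by construction. This also shows that $\tau_G$ coincides with the pushforward $e_{*}\tau_m$ of the product topology along $e$.

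The only subtlety is semantic: the subsets $e(U_1 \times \cdots \times U_m)$ with each $U_i \subset \mathbb{C}$ open form, strictly speaking, only a basis for the pushforward topology rather than the topology itself. I would therefore interpret $\tau_G$ as the collection of arbitrary unions of such basic sets, which by the previous paragraph equals $e_{*}\tau_m$. No genuine obstacle is expected; once bijectivity of $e$ is settled, the remainder is routine bookkeeping, and the main value of the lemma is to set up the differentiable manifold structure on $G$ used in subsequent constructions.
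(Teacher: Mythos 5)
Your proposal is correct and follows essentially the same route as the paper: both transport the product topology $\tau_m$ on $\mathbb{C}^m$ through the map $e$ and verify the topology axioms via the commutation of $e$ with unions and intersections. Your two additional touches --- the explicit injectivity argument via the degree-one component of $e^{\Lambda}$, and the observation that the sets $e(U_1\times\cdots\times U_m)$ are only a basis so that $\tau_G$ must be read as $e_*\tau_m$ --- are points the paper glosses over (it asserts invertibility of $e$ only after the proof, and its own closure-under-unions step already silently enlarges $\tau_G$ to all $e\Omega$ with $\Omega\in\tau_m$), so your version is if anything slightly more careful.
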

\begin{proof}
We first show any union of elements of $\tau_G$ is still in $\tau_G$: let $I$ be any indexing set. Then we can write:
\begin{align}
\bigcup_{\substack{eU_i \in \tau_G \\ i \in I}}eU_i&=\bigcup_{\substack{ U_i \in \tau_m \\ i \in I}}eU_i \\
&=\{e^{\Lambda}\;|\; \Lambda \in \cup_{i \in I}U_i=\Omega \in \tau_m\}  \\
&=e\Omega \in \tau_G
\end{align}
We also show that $\tau_G$ is closed under finite intersections. For $p$ finite:
\begin{align}
\bigcap_{\substack{eU_i \in \tau_G \\ 1 \leq i \leq p}}eU_i &= \bigcap_{\substack{ U_i \in \tau_m \\ 1 \leq i \leq p}}eU_i \\
&=\{e^{\Lambda}\;|\;\Lambda \in \cap_{1 \leq i \leq p}U_i = \Omega \in \tau_m \} \\
&=e\Omega \in \tau_G
\end{align}

Now, since:
\beq
\mathbb{C}^m=\cup_{U \in \tau_m}U
\eeq
then:
\beq
\bigcup_{eU \in \tau_G} eU=\bigcup_{U \in \tau_m}eU=\{e^{\Lambda}\;|\; \Lambda \in \mathbb{C}^m\}=G
\eeq
thus $G \in \tau_G$ as the union of elements of $\tau_G$. It is immediate that we also have $e\emptyset =\{ e^{\Lambda}\;|\; \Lambda \in \emptyset \}=\emptyset \in \tau_G$.
\end{proof}
The map $e$ is clearly invertible, and $\mathbb{C}^m$ being a differentiable manifold, this gives $G$ the structure of a differentiable manifold. Further the inverse map in the group $G$ is clearly differentiable. Finally if we give $G\times G$ the product topology, then the product map in $G$ is also a differentiable map. This shows that $G$ is a Lie group with the above structure.\\

\subsection{The tangent space $TG$ of $G$}
We consider the principal bundle $P=P(X_N,G)$ with base space $X_N$, with fiber the Lie group $G$, and with $G$ acting on itself on the left. A connection on $P$ is valued in the Lie algebra $\mathfrak{g}$ of the Lie group $G$, which is defined to be the set of left-invariant vector fields on $P$. So we naturally have to first define what the tangent space of $G$ is. If $c:I \rightarrow G$ is a path in $G$, $f:G \rightarrow \mathbb{R}$ a function on $G$, then $X \in T_{c(0)}G$ can be defined by:
\beq
Xf|_{c(0)}:=\frac{d f(c(t))}{dt}\Big|_{t=0} \label{X}
\eeq
If we denote by  $\partial_P \in T \mathbb{C}^m$ the partial derivative in the $P$-th direction in $\mathbb{C}^m$, and by $(e^{-1})^P$ the $P$-th component of the map $e^{-1}$, then we can rewrite the right hand side of ~\eqref{X} as:
\beq
\sum_P \frac{d}{dt}[(e^{-1})^P (c(t)) ]\Big|_{t=0}\cdot \partial_P\big|_{(e^{-1})^P (c(t))}f \circ e \circ e^{-1}(c(t))\Big|_{t=0}
\eeq
which leads us to write $X$ as:
\beq
X=\sum_P X^P \partial_P
\eeq
with $X^P:=\frac{d}{dt}[(e^{-1})^P (c(t)) ]|_{t=0}$
\begin{TP}
If $c$ is the following curve in $G$:
\begin{align}
c: I & \rightarrow G \nonumber \\
t &\mapsto e^{\sum_P \lambda_P(t)|P\rangle} \nonumber
\end{align}
then the tangent vector to $c$ at time $t=0$ is the vector $X$ expressed as:
\beq
X=\sum_P \lambda_P '(0)\partial_P
\eeq
\end{TP}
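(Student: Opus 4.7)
The plan is to substitute the given curve directly into the general formula $X = \sum_P X^P \partial_P$ with $X^P = \frac{d}{dt}[(e^{-1})^P(c(t))]|_{t=0}$, which was derived immediately before the statement. The entire content of the lemma reduces to identifying the coordinates $(e^{-1})^P(c(t))$ of the curve under the chart $e^{-1}\colon G \to \mathbb{C}^m$ fixed in the previous subsection.

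First I would make the chart explicit. By construction $e\colon \mathbb{C}^m \to G$ sends $\sum_P \mu_P |P\rangle$ to $\exp\bigl(\sum_P \mu_P |P\rangle\bigr)$, using the basis $\{|P\rangle \;|\; |P|=1\}$ of $\mathbb{C}^m$. It has already been observed that $e$ is a bijection; its inverse simply strips off the exponential and reads the coefficients in this basis. Applied to $c(t) = \exp\bigl(\sum_P \lambda_P(t)|P\rangle\bigr)$, this gives $(e^{-1})^P(c(t)) = \lambda_P(t)$ for each $P$, with no further computation required.

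The final step is then differentiation in $t$ at $t=0$, which yields $X^P = \lambda_P'(0)$, and substitution into $X = \sum_P X^P \partial_P$ produces the claimed expression $X = \sum_P \lambda_P'(0)\partial_P$. There is essentially no obstacle here: the chart was set up precisely so that exponential curves of the stated form project to straight-line paths $t \mapsto (\lambda_P(t))_P$ in $\mathbb{C}^m$. The only point worth stating carefully is that $\{|P\rangle \;|\; |P|=1\}$ is a $\mathbb{C}$-linear basis of $\mathcal{A}^{(1)}(X_N)$, so that coefficient extraction in this basis is a well-defined linear — hence smooth — map, which legitimates commuting the differentiation with the coordinate reading.
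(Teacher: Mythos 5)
Your proof is correct and follows the paper's own argument exactly: both identify $(e^{-1})^P(c(t))=\lambda_P(t)$ via the chart $e$ and then differentiate to obtain $X^P=\lambda_P'(0)$. The extra remark about coefficient extraction being a well-defined linear map is a harmless clarification the paper leaves implicit.
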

\begin{proof}
It suffices to write $(e^{-1})^P(c(t))=\lambda_P(t)$, from which we immediately have:
\beq
\frac{d}{dt}[(e^{-1})^P (c(t)) ]\Big|_{t=0}=\frac{d}{dt}(\lambda_P(t))\big|_{t=0}=\lambda_P '(0)
\eeq
it follows that:
\beq
X=\sum_P  \frac{d}{dt}[(e^{-1})^P (c(t)) ]\Big|_{t=0}\cdot \partial_P = \sum_P \lambda_P '(0) \partial_P
\eeq
\end{proof}

\subsection{The Lie algebra $\mathfrak{g}$ of $G$}
The Lie algebra of $G$ is the set of its left-invariant vector fields, so we determine which of those elements of $TG$ are left-invariant. For $a,g \in G$, we have the left translation map $L_a:G \rightarrow G$ defined by $L_a(g)=ag$, with induced map $L_{a*}:T_g G \rightarrow T_{ag} G$. Since $g \in G$, we can write $g=\exp(\sum_P \lambda_P (0) |P\rangle)$ with $c(t)=\exp(\sum_P \lambda_P (t) |P\rangle)$ a curve in $G$ through $g$ at time $t=0$. Let $X$ be the tangent vector to $G$ at $g$ along this curve, $X \in T_g G$, with:
\beq
X=\sum_P X^P \partial_P|_g
\eeq
and:
\beq
X^P=\frac{d}{dt}[(e^{-1})^P (c(t))]\Big|_{t=0}=\lambda_P '(0)
\eeq
from the previous lemma. Now let $Y$ be the element of $T_{ag} G$ defined by $Y=L_{a*} X$. We have:
\beq
Y=\sum_P Y^P \partial_P|_{ag}
\eeq
with:
\beq
Y^P=\sum_Q X^Q \partial_Q |_{g} [(e^{-1})^P \circ L_a (c(t))]\Big|_{t=0}
\eeq
Let $a=\exp(\sum_P \mu_P |P\rangle)$. Then we have:
\begin{align}
Y^P&=\sum_Q \lambda_Q '(0) \frac{\partial}{\partial (\lambda_Q (t))} [(e^{-1})^P (e^{\sum_R (\mu_R + \lambda_R (t))|R\rangle})]\Big|_{t=0} \\
&=\sum_Q \lambda_Q '(0) \frac{\partial}{\partial (\lambda_Q (t))} ( \mu_P + \lambda_P (t) )\Big|_{t=0} \\
&=\sum_Q \lambda_Q '(0) \delta_{PQ} \\
&=\lambda_P '(0)
\end{align}
From which it follows that:
\beq
Y=\sum_P \lambda_P '(0) \partial_P |_{ag}
\eeq
Finally, it suffices to observe that:
\beq
X|_{ag}=\sum_P X^P (ag) \partial_P |_{ag}
\eeq
with:
\begin{align}
X^P (ag)&=\frac{d}{dt}[ (e^{-1})^P ( e^{\sum_Q (\mu_Q +\lambda_Q (t)) |Q\rangle})] \Big|_{t=0} \\
&=\lambda_P '(0)
\end{align}
It follows:
\beq
Y = X|_{ag}
\eeq
that is $L_{a*}(X|_g)=X|_{ag}$, and this for any curve through $g$, and this for all $g$ in $G$, thus the set of left-invariant vector fields of $G$ is all of $TG$, whose basis is given by $\{ \partial_P \;|\; |P|=1\}$, a basis of vector fields for $T \mathbb{C}^m$ whose fiber is isomorphic to $\mathbb{C}^m$ with basis the vectors $\{|P\rangle \;|\; |P|=1\}$, and by isomorphism this gives the basis for left-invariant vector fields. Therefore we regard the Lie algebra $\mathfrak{g}$ of $G$ as being generated by those elements. Thus any element of $\mathfrak{g}$ can be written as $\sum_P \lambda_P |P \rangle$ for some complex numbers $\lambda_P$.\\

\subsection{The cotangent space $T^*G$ of $G$}
In the same manner that we used the compact notation $\partial_P$ to denote the partial derivative in the $P$ direction, we introduce a similar notation for forms. If the $P$-th coordinate in $\mathbb{C}^m$ is denoted by $\psi_P$, then we have $\partial_P = \partial / \partial (\psi_P)$ and a basis for $T^*G$ is given by $\{ d \psi_P \;|\; |P|=1 \}$. For a function $f: G \rightarrow \mathbb{R}$ we write $df=\sum_P \partial_P f \cdot d\psi_P$.\\

\subsection{The vertical space $VP$ of $P$}
We are interested in putting a connection on the principal bundle $P$. For this purpose we first define the vertical tangent space $VP$ of $P$. Let $u$ be an element of $P=P(X_N, G)$. If $\pi$ is the projection from $P$ to the base space $X_N$ which in any local trivialization is just projection on the first component, $Z=\pi(u) \in X_N$, then $G_Z=G$ is the fiber of $P$ over $Z$. Let $A=\sum_P \lambda_P (Z) |P\rangle \in \mathfrak{g}$. Then $u.e^{tA}$ defines a curve through $u$ in $G_Z$, and if in a local trivialization we write $u=(Z,g_u)$, $g_u \in G$, then $u \cdot e^{tA}=(Z,g_{u\cdot e^{tA}})=(Z,g_u \cdot e^{tA})$. We define a tangent vector $A^{\star} \in V_u P$ as follows. For a given function $f: P \rightarrow \mathbb{R}$:
\beq
A^{\star} f(u):=\frac{d}{dt}(f(u.e^{tA}))\Big|_{t=0}
\eeq
Thus defined, $A^{\star}$ is tangent to $G_Z$ at $u$ and is therefore in the vertical subspace $V_u P$ of $T_u P$. Write $g_u=exp(\sum_P \mu_P (Z) |P\rangle (Z))$. Writing $A^{\star} = \sum_P A^{\star P} \partial_P$, it follows:
\begin{align}
A^{\star P}(P)&= \frac{d}{dt}[(e^{-1})^P \left(Z,e^{\sum_Q ( \mu_Q (Z) + t\lambda_Q (Z))|Q\rangle (Z)}\right)]\Big|_{t=0} \\
&=\frac{d}{dt}(\mu_P (Z) + t \lambda_P (Z))\Big|_{t=0} \\
&=\lambda_P (Z)
\end{align}
thereby defining the following vector space isomorphism:
\begin{align}
\mathfrak{g} & \stackrel{\star}{\longrightarrow } VP \nonumber \\
A=\sum_P \lambda_P |P\rangle & \mapsto A^{\star} = \sum_P \lambda_P \partial_P
\end{align}

\subsection{The connection $\tilde{\omega}$ on $P$}
In the same manner that we had left multiplication on $G$ we also have a right multiplication map denoted by $R$. If $a,g \in G$ then $R_a (g)=ga$, and the map $R_a$ induces a map $R_a ^*: T^* _{ua}P \rightarrow T^* _u P$. Further, we have the adjoint action of $G$ on itself defined by $ad_g h:=ghg^{-1}$ with a corresponding induced map $ad_{g*}=Ad_g:T_h G \rightarrow T_{ghg^{-1}}G$, and for $h=1$ the identity of $G$ we have a resulting adjoint map $Ad_g : \mathfrak{g} \rightarrow \mathfrak{g}$ defined by $A \mapsto Ad_g (A)=gAg^{-1}$.\\
We now define the connection one form $\tilde{\omega}$ on $P$. A connection one form $\tilde{\omega} \in \mathfrak{g}\otimes T^*P$ is a projection of $TP$ onto $VP \simeq \mathfrak{g}$, satisfying:
\begin{align}
(1)\qquad & \tilde{\omega}(A^{\star})=A \\
(2)\qquad & R_g ^* \tilde{\omega}= Ad_{g^{-1}} \tilde{\omega}
\end{align}
For our connection, we choose:
\beq
\tilde{\omega}=\sum_P |P\rangle d\psi_P
\eeq
\begin{tildeomega}
$\tilom$ is a well-defined connection on $P(X_N,G)$ over $X_N$.
\end{tildeomega}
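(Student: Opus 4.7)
The plan is to verify the two defining axioms of a connection one form directly from the explicit formula $\tilom = \sum_P |P\rangle\, d\psi_P$, using the exponential coordinates $\psi_P$ on the fiber $G$ that were set up in the preceding subsections.

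First I would verify axiom (1), that $\tilom(A^\star) = A$ for every $A \in \mathfrak{g}$. Writing $A = \sum_P \lambda_P |P\rangle$, the vertical lift is $A^\star = \sum_P \lambda_P \partial_P$ by the isomorphism $\mathfrak{g} \xrightarrow{\star} VP$ established above. Since $\partial_P = \partial/\partial \psi_P$, we have $d\psi_P(\partial_Q) = \delta_{PQ}$, so
\beq
\tilom(A^\star) = \sum_P |P\rangle\, d\psi_P\Bigl(\sum_Q \lambda_Q \partial_Q\Bigr) = \sum_P \lambda_P |P\rangle = A,
\eeq
which is the desired identity. This step is entirely mechanical.

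Next I would address axiom (2), $R_g^*\tilom = Ad_{g^{-1}}\tilom$. The key observation is that $G = (\exp \mathcal{A}^{(1)}(X_N), \cdot)$ is commutative, as was noted when $G$ was introduced. Commutativity forces $ad_g h = ghg^{-1} = h$ for all $g,h \in G$, so $Ad_{g^{-1}}$ acts as the identity on $\mathfrak{g}$ and the right hand side of (2) is simply $\tilom$. It therefore suffices to prove right invariance, $R_g^*\tilom = \tilom$. For $g = \exp(\sum_P \mu_P |P\rangle)$ and $h = \exp(\sum_P \psi_P |P\rangle)$, the product formula
\beq
hg = e^{\sum_P \psi_P |P\rangle}\cdot e^{\sum_P \mu_P |P\rangle} = e^{\sum_P (\psi_P + \mu_P)|P\rangle}
\eeq
(valid because $G$ is commutative) shows that $R_g$ acts in exponential coordinates as the translation $\psi_P \mapsto \psi_P + \mu_P$. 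Consequently $R_g^*\, d\psi_P = d\psi_P$ for every $P$, and therefore
\beq
R_g^*\tilom = \sum_P |P\rangle\, R_g^*\, d\psi_P = \sum_P |P\rangle\, d\psi_P = \tilom,
\eeq
which establishes (2).

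I expect the main conceptual obstacle to be not the two axioms themselves, which are almost tautological once the exponential coordinates are in place, but rather justifying that the $d\psi_P$ assemble into a globally well defined one form on the total space $P(X_N,G)$. This requires that the coordinates $\psi_P$ extend consistently from the fiber $G$ to $P$ across local trivializations. Since the structure group $G$ is commutative, transition functions act on the fiber by translations of the $\psi_P$, under which $d\psi_P$ is invariant; this makes the forms $d\psi_P$ transition consistently and renders $\tilom \in \mathfrak{g}\otimes T^*P$ genuinely well defined, completing the verification.
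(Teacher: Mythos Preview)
Your proof is correct and follows essentially the same approach as the paper: axiom (1) is verified by the identical dual-pairing computation, and for axiom (2) both you and the paper exploit the commutativity of $G$ to reduce $Ad_{g^{-1}}$ to the identity and then use that $R_g$ is a translation $\psi_P \mapsto \psi_P + \mu_P$ in the exponential coordinates. The only difference is cosmetic: the paper computes the components of $R_{g*}X$ explicitly and then pairs with $d\psi_P$, whereas you phrase the same fact as $R_g^* d\psi_P = d\psi_P$; your closing remark on global well-definedness is a welcome addition the paper leaves implicit.
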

\begin{proof}
For $A=\sum_P \lambda_P |P\rangle \in \mathfrak{g}$, we have the corresponding element of $VP$ $A^{\star}=\sum_P \lambda_P \partial_P$. Now:
\begin{align}
\tilom ( A^{\star})&=(\sum_P |P\rangle d\psi_P )(\sum_Q \lambda_Q \partial_Q) \\
&=\sum_P |P\rangle d\psi_P (\sum_Q \lambda_Q \partial_Q) \\
&=\sum_{P,Q} |P\rangle \lambda_Q d\psi_P (\partial_Q) \\
&=\sum_{P,Q} |P\rangle \lambda_Q \delta_{P,Q} \\
&=\sum_P \lambda_P |P\rangle \\
&=A
\end{align}
so that $\tilom ( A^{\star})=A$. In a local trivialization $P \simeq U \times G$, $\pi$ is just the projection on the first factor. If $\pi_G$ is the projection on the second factor, then we locally have ~\cite{Sp}: $TP \simeq T(U \times G) \simeq \pi^* TU \oplus \pi_G ^* TG$. Then $X \in T_u P$, $u=(Z,g_u)$ in a local trivialization, can be written as a sum $X=X_1 + X_2$, $X_1 \in \pi^* T_Z X_N$, $X_2 \in T_{g_u} G$, $\tilom _u (X_1)=0$, $X_2=\sum_P X^P \partial_P$. Then $\tilom_u(X)=\tilom_u (X_2)=\sum_P X^P |P\rangle$. For $g \in G$, by commutativity in $\mathcal{A}^{(1)}(X_N)$ it follows that $Ad_{g^{-1}}\tilom_u(X)=g^{-1}\tilom_u (X)g=\tilom_u (X)=\sum_P X^P |P\rangle$. On the other hand $R_g ^* \tilom_{ug} (X)=\tilom_{ug}(R_{g*}X)$, with $R_{g*}X=R_{g*}X_1 + R_{g*}X_2=R_{g*} X_1 + \sum_P Y^P \partial_P |_{ug}$, with:
\beq
Y^P =\sum_Q X^Q \partial_Q |_u ((e^{-1})^P (R_g u))
\eeq
If we let $g=\exp(\sum_P \mu_P |P\rangle)$ and $u=\left(Z,\exp(\sum_P \lambda_P |P\rangle)\right)$, then we have:
\begin{align}
Y^P&=\sum_Q X^Q \partial_Q |_u [ (e^{-1})^P \left(Z, e^{\sum_R ( \lambda_R + \mu_R)|R\rangle}\right)] \\
&=\sum_Q X^Q \partial_Q |_u ( \lambda_P + \mu_P) \\
&=\sum_Q X^Q \frac{\partial}{\partial ( \lambda_Q)}(\lambda_P + \mu_P)\\
&=X^P|_u
\end{align}
From which it follows that $R_{g*}X=R_{g*}X_1 + \sum_P X^P|_u \cdot \partial_P |_{ug}$. And:
\begin{align}
R_g ^* \tilom_{ug} (X)&=\tilom_{ug}(R_{g*}X) \\
&=\tilom_{ug} (R_{g*}X_1 + \sum_P X^P |_u \cdot \partial_P |_{ug})\\
&=\tilom_{ug} (\sum_P X^P |_u \cdot \partial_P |_{ug})\\
&=(\sum_P |P\rangle d\psi_P|_{ug})(\sum_Q X^Q|_u \partial_Q |_{ug}) \\
&=\sum_{P,Q} |P\rangle X^Q |_u \delta_{P,Q} \\
&=\sum_P X^P |_u |P\rangle \\
&=\tilom_u (X)\\
&=Ad_{g^{-1}} \tilom_u (X)
\end{align}
which completes the proof.
\end{proof}

\subsection{Local form $\omega$ of the connection one form $\tilom$}
We wish to write a local expression for $\tilom$ by pulling it back to $X_N$. Let $U$ be an open set in $X_N$, $\pi: P \rightarrow X_N$, $\pi^{-1}U$ open in $P$, on which we define $TP$ and $T^*P$. A section of $P$ at $Z \in U$ is of the form $(Z,\exp(\sum_P \psi_P (Z) |P\rangle (Z)))$. Consider a local section $\sigma$ of $P$ on $U$ for which $\psi_P (Z)=\frac{1}{2 \pi i}\logp (Z)$. Then we define:
\begin{align}
\omega &:= \sigma ^* \tilom \\
&=\sigma ^* (\sum_P |P\rangle d\psi_P) \\
&=\sum_P |P\rangle \sigma ^* d\psi_P \\
&=\sum_P |P\rangle d\psi_P (\sigma_{*})
\end{align}
For the purpose of computing $\omega$ we work with small enough basic open sets in $X_N$. Recall that the topology $\tau_N$ on $X_N$ is generated by small enough basic open sets $V=\{V_1,...,V_N\}$, $V_1, ..., V_N$ $N$ disjoint open sets in the complex plane. We can write $U$ as a union of such open sets. Let $V$ be one such small enough basic open such that $Z \in V$. Then we can locally label the $N$ defining points of $Z$ as $z_1,...,z_N$, and we locally have a basis $\partial / \partial z_k$, $1 \leq k \leq N$ of $T X_N$. Fix $k$, $1 \leq k \leq N$. Then:
\beq
\sigma_{*} \frac{\partial}{\partial z_k}=\sum_P \lambda_P \partial_P
\eeq
with:
\beq
\lambda_P=\frac{1}{2 \pi i}\frac{\partial}{\partial z_k} \logp (Z)
\eeq
so that:
\beq
\sigma_{*} \frac{\partial}{\partial z_k}=\sum_P \frac{1}{2 \pi i}\frac{\partial}{\partial z_k} \logp (Z) \partial_P
\eeq
It follows:
\begin{align}
(\sigma^* d\psi_P )(\frac{\partial}{\partial z_k})&= d\psi_P \Big( \sum_Q \frac{1}{2 \pi i}\frac{\partial}{\partial z_k} log \vartriangle \!\!z[Q] (Z) \partial_Q\Big) \\
&=\sum_Q \frac{1}{2 \pi i}\frac{\partial}{\partial z_k} log \vartriangle \!\! z[Q] (Z) d\psi_P (\partial_Q) \\
&=\sum_Q \frac{1}{2 \pi i}\frac{\partial}{\partial z_k} log \vartriangle \!\! z[Q] (Z) \delta_{P,Q} \\
&=\frac{1}{2 \pi i}\frac{\partial}{\partial z_k} \logp (Z)
\end{align}
from which we get:
\begin{align}
 \sigma^* d\psi_P &= \sum_k \frac{1}{2 \pi i}\frac{\partial}{\partial z_k} \logp (Z) dz_k \\
&=\frac{1}{2 \pi i}\dlogp
\end{align}
Finally:
\beq
\omega=\sigma^* \tilom = \frac{1}{2 \pi i}\sum_P |P\rangle \dlogp
\eeq
which is commonly referred to as the Knizhnik-Zamolodchikov connection one form. Notice that this derivation is independent of the choice of labels for the defining points of $Z$ in a basic open set.\\

\subsection{Local connection $\tiloml$ on $P$}
We now seek to construct a local form of a connection on $P$ derived from the local form $\omega$ on $X_N$, which is itself derived from the connection one form $\tilom$ on $P$, for the purpose of obtaining the Knizhnik-Zamolodchikov equation and show that upon integration we get a holonomy for that particular connection which generates the Kontsevich integral.\\

For the section $\sigma$ introduced above, $Z \in X_N$ fixed, any element $u$ of $P$ in the fiber $G_Z$ above $Z$ can be written $u=\sigma(Z)g$ for some $g \in G$. Using $g$ thus defined, consider the following element of $T_u ^* P$:
\beq
\tiloml :=-g^{-1} \pi^* \omega g + g^{-1} d_P g
\eeq
where $d_P$ is the exterior derivative on $P$.
\begin{pullbacktiloml}
we have:
\beq
\sigma^* \tiloml = -\omega
\eeq
\end{pullbacktiloml}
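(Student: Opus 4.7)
The plan is to push through the defining expression $\tiloml = -g^{-1}\pi^*\omega\, g + g^{-1} d_P g$ term by term under $\sigma^*$, using that (i) $\pi\circ\sigma = \mathrm{id}_{X_N}$, (ii) along the image of $\sigma$ the fiber coordinate $g$ is constantly the identity of $G$, and (iii) $G$ is abelian so conjugations collapse. The main subtlety will be interpreting $g$ correctly as a function on $P$, not a parameter, so that $d_P g$ really is an honest one-form on $P$ whose pullback along $\sigma$ is computable.

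First I would set up coordinates carefully. In a local trivialization $\pi^{-1}(U)\simeq U\times G$, any $u\in\pi^{-1}(U)$ is a pair $u=(Z,h)$, and the prescription $u=\sigma(Z)g$ together with the fact that $\sigma(Z)=\bigl(Z,\exp(\sum_P\psi_P(Z)|P\rangle)\bigr)$ defines $g$ as the smooth $G$-valued function $g(u)=\exp\!\bigl(-\sum_P\psi_P(Z)|P\rangle\bigr)\cdot h$ on $\pi^{-1}(U)$. In particular, on the image of $\sigma$ we have $h=\exp(\sum_P\psi_P(Z)|P\rangle)$, hence $(\sigma^*g)(Z)=1_G$ identically on $U$.

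Next I would compute each term. Because $\sigma^*g\equiv 1_G$ is constant, $\sigma^*(d_P g)=d(\sigma^*g)=0$, which immediately kills the Maurer--Cartan piece:
\beq
\sigma^*\bigl(g^{-1}d_P g\bigr)=(\sigma^*g)^{-1}\cdot\sigma^*(d_P g)=1_G\cdot 0=0.
\eeq
For the remaining term, functoriality gives $\sigma^*\pi^*\omega=(\pi\circ\sigma)^*\omega=\omega$, and since $G=(\exp\mathcal{A}^{(1)}(X_N),\cdot)$ is commutative the conjugation $g^{-1}(\cdot)g$ is the identity on $\mathfrak{g}$-valued forms; alternatively, evaluating at $\sigma(Z)$ where $\sigma^*g=1_G$ gives the same conclusion. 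Therefore
\beq
\sigma^*\bigl(g^{-1}\pi^*\omega\, g\bigr)=(\sigma^*g)^{-1}\,\omega\,(\sigma^*g)=\omega.
\eeq

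Combining the two pieces,
\beq
\sigma^*\tiloml=-\sigma^*(g^{-1}\pi^*\omega\,g)+\sigma^*(g^{-1}d_P g)=-\omega+0=-\omega,
\eeq
which is the claim. The hardest step is the first one: being precise about the status of $g$ as a function on $P$ (rather than a dummy group element) so that the statement $\sigma^*g\equiv 1_G$, and consequently the vanishing $\sigma^*d_P g=0$, is rigorous; once that is in place, commutativity of $G$ and the tautology $\pi\circ\sigma=\mathrm{id}$ do all the remaining work.
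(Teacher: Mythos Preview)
Your proof is correct and follows essentially the same route as the paper: both arguments hinge on the observation that along the image of $\sigma$ the fiber coordinate $g$ is identically $1_G$, so the Maurer--Cartan term vanishes, while $\pi\circ\sigma=\mathrm{id}$ collapses the first term to $-\omega$. Your treatment is slightly more explicit about why $g$ is a well-defined smooth function on $P$ with $\sigma^*g\equiv 1_G$, but the underlying logic is identical.
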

\begin{proof}
For $X \in T_Z X_N$, we have:
\beq
\sigma^* \tiloml (X)= \tiloml ( \sigma_* X)
\eeq
and $\sigma_* X \in T_{\sigma(Z)} P$ where $g=1$, so that:
\begin{align}
\sigma^* \tiloml (X)&=-\pi^* \omega (\sigma_* X) + d_P g (\sigma_* X) \\
&= -\omega ( \pi _* \sigma_* X) + d_P g ( \sigma_* X)
\end{align}
Now we use the fact that $\pi_* \sigma_* =id_{T_Z X_N}$ and $d_P g (\sigma_* X)=0$ since $g=1$ along $\sigma _* X$. It follows that we have:
\beq
\sigma^* \tiloml (X)=-\omega (X) +0 =-\omega(X)
\eeq
and this for any element $X$ of $T_Z X_N$ and for all $Z \in X_N$, whence the result.
\end{proof}
We now show $\tiloml$ verifies the axioms of a connection on $P$:
\begin{tilomlconn}
$\tiloml$ is a well-defined connection on $P$.
\end{tilomlconn}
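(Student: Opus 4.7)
The plan is to verify the two defining axioms of a connection one form on $P$, namely (1) $\tiloml(A^\star) = A$ for every $A \in \mathfrak{g}$, and (2) $R_h^* \tiloml = Ad_{h^{-1}} \tiloml$ for every $h \in G$. The crucial simplification throughout is that $G = \exp \mathcal{A}^{(1)}(X_N)$ is a \emph{commutative} Lie group, hence $\mathfrak{g}$ is abelian; every adjoint action on $\mathfrak{g}$ is trivial, and $\mathfrak{g}$-valued conjugations of the form $g^{-1}(\cdot)g$ reduce to the identity. In particular $Ad_{h^{-1}}$ is the identity on $\mathfrak{g}$, so axiom (2) becomes plain $R_h$-invariance $R_h^* \tiloml = \tiloml$, and the conjugating factors in the formula $\tiloml = -g^{-1}\pi^*\omega g + g^{-1}d_P g$ effectively collapse whenever they act on already $\mathfrak{g}$-valued quantities.

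For axiom (1), fix $A=\sum_P\lambda_P|P\rangle\in\mathfrak{g}$ and its fundamental vector field $A^\star\in V_u P$. Because $A^\star$ is vertical, $\pi_* A^\star=0$, so $\pi^*\omega(A^\star)=\omega(\pi_* A^\star)=0$ and the first summand of $\tiloml$ vanishes on $A^\star$. For the second summand, use the defining curve $t\mapsto u\cdot e^{tA}$ for $A^\star$: if $u=\sigma(Z)g$ then $u\cdot e^{tA}=\sigma(Z)\,g e^{tA}$, so the $G$-coordinate along the curve is $g e^{tA}$, and differentiation at $t=0$ gives $d_P g(A^\star)=gA$. Multiplying by $g^{-1}$ and invoking commutativity of $G$ yields $\tiloml(A^\star)=g^{-1}(gA)=A$, as required.

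For axiom (2), decompose an arbitrary $X\in T_u P$ into its horizontal and vertical parts using a local trivialization, as was done in the proof that $\tilom$ is a connection. Treating the two summands of $\tiloml$ in turn: since $\pi\circ R_h=\pi$, one has $R_h^*(\pi^*\omega)=\pi^*\omega$, and commutativity eliminates the conjugation, so $-g^{-1}\pi^*\omega g$ is $R_h$-invariant. For the Maurer--Cartan-like summand, the $G$-coordinate map $g:\pi^{-1}(U)\to G$ satisfies $g\circ R_h=g\cdot h$ by definition of right multiplication on $P$; hence $d_P(g\circ R_h)=h\,d_P g$ in the abelian Lie group $G$, and $(gh)^{-1}(h\,d_P g)=g^{-1}d_P g$. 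Thus both summands are invariant under $R_h^*$, and combined with $Ad_{h^{-1}}=\mathrm{id}$ this gives $R_h^*\tiloml=\tiloml=Ad_{h^{-1}}\tiloml$.

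The main obstacle, modest in this commutative setting, is simply bookkeeping around the $G$-coordinate map: the symbol $g$ in the definition of $\tiloml$ is really the function $\pi^{-1}(U)\to G$ determined by the chosen section $\sigma$ via $u=\sigma(Z)g$, and one must track precisely how it and its differential transform under right translation. Once one pins down that $g\circ R_h=g\cdot h$ and exploits commutativity of $G$ to collapse every $\mathfrak{g}$-valued conjugation, both axioms reduce to the short computations above. Well-definedness on $P$ follows from the compatibility of the local trivialization with the section $\sigma$ used in the preceding lemma, which already established $\sigma^*\tiloml=-\omega$.
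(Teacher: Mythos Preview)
Your proof is correct and follows essentially the same approach as the paper: verify the two connection axioms directly, using $\pi_* A^\star=0$ to kill the first summand in axiom (1) and $g_{uh}=g_u h$ together with $\pi R_h=\pi$ for axiom (2). The only cosmetic difference is that you invoke the commutativity of $G$ at the outset to collapse every $Ad$ and conjugation to the identity, whereas the paper carries the conjugating factors through the computation and only identifies $h^{-1}\tiloml(X)h$ with $Ad_{h^{-1}}\tiloml(X)$ at the end; both arrive at the same result by the same mechanism.
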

\begin{proof}
First for some $A$ in $\mathfrak{g}$ of the form $A=\sum_P \lambda_P |P\rangle$, $u \in P$, and a corresponding $A^{\star}=\sum_P \lambda_P \partial_P \in V_u P$, then we show $\tiloml (A^{\star})=A$.
\begin{align}
\tiloml ( A^{\star})&= -g^{-1} \omega ( \pi_* A^{\star} )g + g^{-1} d_P g (A^{\star}) \\
&=-g^{-1} \omega (0) g + g^{-1} \frac{d}{dt} ( g_{u.e^{tA}})\Big|_{t=0} \\
&= g^{-1} \frac{d}{dt}( g_u.e^{tA})\Big|_{t=0} \\
&=g_u ^{-1}  g_u \frac{d}{dt} ( e^{tA}) \Big|_{t=0} \\
&= A
\end{align}
Further we have to show that we have $R_h ^* \tiloml = Ad_{h^{-1}} \tiloml$: let $X \in T_u P$, $h \in G$. Then we have:
\beq
R_h ^* \tiloml (X)= \tiloml (R_{h*} X)= -g_{uh} ^{-1} \omega ( \pi_* R_{h*} X) g_{uh} + g_{uh} ^{-1} d_P g_{uh} (R_{h*} X) \label{ahh}
\eeq
We now use $g_{uh}=g_u \cdot h$, $ \pi_* R_{h*} X= \pi_* X$ since $\pi R_h =\pi$, and the following:
\begin{align}
R_{h*}Xg_{uh}&=R_{h*}Xg|_{uh}\\
&=\frac{d}{dt}g_{u.c(t)}\cdot h \Big|_{t=0}\\
&=Xg_u \cdot h=d_P g_u(X)\cdot h
\end{align}
to rewrite ~\eqref{ahh} as:
\begin{align}
R_h ^* \tiloml(X)&= -h^{-1} g_u ^{-1} \omega (\pi_* X) g_u h + h^{-1} g_u ^{-1} d_P g_u (X)h \\
&=h^{-1} \tiloml (X)h \\
&=Ad_{h^{-1}}\tiloml (X)
\end{align}
and this for any $X \in T_uP$ and for any $u \in P$. This completes the proof
\end{proof}

\subsection{Horizontal lift $\tilde{\gamma}$ in $P$ of a curve $\gamma$ in $X_N$}
We construct a horizontal lift of some curve $\gamma$ in $X_N$. We do this locally. For this purpose we work in an open set $U$ that contains the curve $\gamma$. Suppose we have two open sets $U_i$ and $U_j$ in $X_N$ whose overlap contains $U$, and $\sigma_i$ and $\sigma_j$ are two sections of $P$ over $U_i$ and $U_j$ respectively. If $\tilde{\gamma}$ is a horizontal lift of $\gamma$, then for all $t \in I$ we can write $\tilde{\gamma}(t)=\sigma_j (\gamma t) g_j (\gamma t)$ and we write $g_j (\gamma t)=g_j (t)$ for short. We can pick a lift for which $g_j (0)=1$ so that $\tilde{\gamma}(0)=\sigma_j (\gamma 0)$. If $X \in T_{\gamma 0}X_N$, let $\tilde{X}$ be the horizontal lift of $X$ along $\tilde{\gamma}$. We have $\tilde{X}=\tilde{\gamma}_* X$. We would like an expression for $\tilde{X}$ that will enable us to extract the KZ equation. For this purpose we need the following lemma where we use the fact that $\tilde{\gamma}(t)=\sigma_j (t) g_j (t)=\sigma_i (t) g_i (t)$ where we have omitted the mention of $\gamma$ for the simplicity of notation.
\begin{poundeq}[\cite{Nak}]
$\tilde{X}=\sigma_{i*}X\cdot g_i (t) + (g_{i} ^{-1} dg_{i} (X))^{\star}$
\end{poundeq}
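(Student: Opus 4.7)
The plan is to read off $\tilde{X}=\tilde{\gamma}_*X$ directly from the factorization $\tilde{\gamma}(s)=\sigma_i(\gamma s)\cdot g_i(\gamma s)$ by applying the Leibniz rule to the right $G$-action on $P$, splitting the derivative into a piece that comes from varying the section factor and a piece that comes from varying the group factor. No horizontality of $\tilde{\gamma}$ is needed at this stage; horizontality will be invoked only later (when $\tiloml$ is applied to both sides to extract the KZ equation).

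Concretely, I would introduce the auxiliary two-variable map $\Phi:I\times I\to P$ defined by $\Phi(s_1,s_2)=\sigma_i(\gamma s_1)\cdot g_i(\gamma s_2)$, so that $\tilde{\gamma}(s)=\Phi(s,s)$ and, by the chain rule,
\[
\tilde{X}=\partial_1\Phi(t,t)+\partial_2\Phi(t,t).
\]
The first summand is the derivative at $s=t$ of the curve $s\mapsto \sigma_i(\gamma s)\cdot g_i(t)$, which is just $R_{g_i(t)}$ applied to $s\mapsto\sigma_i(\gamma s)$; its differential at $s=t$ is therefore $R_{g_i(t)*}\sigma_{i*}X$, and this is precisely the shorthand $\sigma_{i*}X\cdot g_i(t)$ used in the statement.

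For the second summand I would use the rewriting
\[
\sigma_i(\gamma t)\cdot g_i(\gamma s)=\tilde{\gamma}(t)\cdot\bigl(g_i(t)^{-1}g_i(\gamma s)\bigr),
\]
which realizes the curve $s\mapsto\sigma_i(\gamma t)\cdot g_i(\gamma s)$ as a curve of the form $u\cdot h(s)$ with $u=\tilde{\gamma}(t)$ and $h(t)=1\in G$. By the definition of the fundamental vector field recalled in the subsection on $VP$, namely $A^{\star}|_u=\frac{d}{dt}(u\cdot e^{tA})|_{t=0}$, the derivative of such a curve at $s=t$ is $(h'(t))^{\star}|_{\tilde{\gamma}(t)}$, where
\[
h'(t)=g_i(t)^{-1}\dot g_i(t)=g_i^{-1}dg_i(X)\ \in\ \mathfrak{g}.
\]
Adding the two contributions yields the advertised formula.

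The main obstacle is purely organizational: one must carefully distinguish the right action (appearing in the first summand as $R_{g_i(t)*}$, which the paper writes as juxtaposition ``$\cdot g_i(t)$'') from the left multiplication appearing when interpreting the vertical summand, and one must ensure the fundamental vector field is based at $\tilde{\gamma}(t)$ rather than at $\sigma_i(\gamma t)$, which is exactly what forces the rewriting $\sigma_i(\gamma t)\,g_i(\gamma s)=\tilde{\gamma}(t)\,(g_i(t)^{-1}g_i(\gamma s))$. Once this bookkeeping is handled, the argument reduces to the chain rule and the very definition of $A^{\star}$.
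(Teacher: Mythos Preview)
Your proposal is correct and follows essentially the same approach as the paper: apply the Leibniz rule to the factorization $\tilde{\gamma}=\sigma_i\cdot g_i$, recognize the first term as $R_{g_i(t)*}\sigma_{i*}X$, and rewrite the second so that it becomes a fiber curve through the identity, hence a fundamental vector field. The only organizational difference is that the paper introduces a second section $\sigma_j$ with $g_j(0)=1$ (so that $\sigma_i(0)=\tilde{\gamma}(0)g_i(0)^{-1}$) and computes at $t=0$ before invoking a ``time reparametrization $0\mapsto t$'', whereas your two-variable map $\Phi$ lets you work directly at an arbitrary $t$ and bases the fundamental vector field at $\tilde{\gamma}(t)$ without the auxiliary section; this is a cleaner packaging of the same computation.
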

\begin{proof}
It suffices to write:
\begin{align}
\tilde{X}&= \tilde{\gamma}_* X \\
&=\frac{d}{dt}[ \tilde{\gamma} (t)]\Big|_{t=0} \\
&=\frac{d}{dt}[\sigma_i (t) g_i (t)]\Big|_{t=0}\\
&=\frac{d}{dt}\sigma_i (t)\Big|_{t=0}\cdot g_i (0) + \sigma_i (0) \cdot \frac{d}{dt}g_i (t)\Big|_{t=0} \\
&= \frac{d}{dt}[\sigma_i (t)\cdot g_i (0)]\Big|_{t=0} + \sigma_j (0)g_j (0) g_i ^{-1} (0) \frac{d}{dt}g_i (t)\Big|_{t=0} \\
&=\sigma _{i*}X\cdot g_i (0) + \frac{d}{dt}[\sigma_j (0) g_i ^{-1} (0)g_i (t)]\Big|_{t=0}
\end{align}
where we used the fact that $g_j(0)=1$. Working on the second term alone, $\sigma_j (0) g_i ^{-1} (0)g_i (t)$ is a curve in $G_{\sigma_j(0)}$, thus the whole second term is in $V_{\sigma_j(0)}P$ and we can write:
\begin{align}
\frac{d}{dt}[\sigma_j (0) g_i ^{-1}(0) g_i (t)]\Big|_{t=0}&=\Big(\frac{d}{dt}[\sigma_j (0) g_i ^{-1} (0)g_i (t)]\Big|_{t=0}\Big)^{\star}\\
&=(\frac{d}{dt}g_i (t))^{\star}\Big|_{\sigma_j (0) g_i ^{-1}(0)} \\
&=(dg_i (X))^{\star}\Big|_{\sigma_j (0) g_i ^{-1}(0)} \\
&=(g_i ^{-1}\cdot dg_i (X))^{\star}\Big|_{\sigma_j (0)}
\end{align}
which completes the proof after time reparametrization $0 \mapsto t$.
\end{proof}
As a corollary, since this result is independent of the choice of local section, we have:
\begin{poundeqsimple}
$\tilde{X}= \sigma_{*} X\cdot g (t) + ( g ^{-1} (t)\cdot dg (X))^{\star}$
\end{poundeqsimple}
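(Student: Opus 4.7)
The plan is very short because the corollary is essentially a relabeling statement: the lemma already produces, for any local section $\sigma_i$ defined in a neighborhood of $\gamma(0)$, an expression for $\tilde{X}$ that depends only on $\tilde{X}$ itself. Since $\tilde{X}=\tilde{\gamma}_*X$ is intrinsically defined (the horizontal lift of $X$ along $\tilde{\gamma}$ requires only the connection and the curve, not any auxiliary section), the right-hand side of Lemma \ref{poundeq} must take the same value for every choice of local section $\sigma_i$. Therefore the index $i$ can simply be suppressed.

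To carry this out rigorously I would first invoke Lemma \ref{poundeq} for an arbitrary section $\sigma_i$ defined on an open set $U_i$ containing $\gamma(0)$, giving
\beq
\tilde{X}=\sigma_{i*}X\cdot g_i(t)+\bigl(g_i^{-1}dg_i(X)\bigr)^{\star}.
\eeq
Next I would observe that if $\sigma_j$ is any other local section around $\gamma(0)$, then applying the same lemma with $i$ replaced by $j$ yields the analogous formula with $(\sigma_j,g_j)$ in place of $(\sigma_i,g_i)$, and both formulas must agree since both compute the same horizontal lift $\tilde{\gamma}_*X$. Hence the value of the expression $\sigma_{*}X\cdot g(t)+(g^{-1}dg(X))^{\star}$ is independent of the particular $(\sigma,g)$ used to decompose $\tilde{\gamma}(t)=\sigma(t)g(t)$, and we may legitimately drop the subscript.

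The only thing to be careful about is that $g(t)$ is determined by the choice of section (different $\sigma$ give different $g$), so the statement is about the value of the sum, not of each summand separately. I would emphasize this so that the reader understands the corollary as saying: \emph{given any local section $\sigma$ of $P$ over a neighborhood of $\gamma(0)$ with $\tilde{\gamma}(t)=\sigma(\gamma t)g(t)$, the formula holds}, rather than as a claim that $\sigma_{*}X\cdot g(t)$ or $(g^{-1}dg(X))^{\star}$ individually are section-independent. There is no real obstacle; the entire content of the corollary is notational cleanup of Lemma \ref{poundeq}, made legitimate by the intrinsic nature of $\tilde{X}$.
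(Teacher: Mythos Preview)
Your proposal is correct and matches the paper's approach exactly: the paper simply remarks ``since this result is independent of the choice of local section'' and states the corollary without further proof. Your elaboration that $\tilde{X}=\tilde{\gamma}_*X$ is intrinsically defined, and your caveat that only the \emph{sum} (not the individual summands) is section-independent, make explicit what the paper leaves implicit.
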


\subsection{The KZ equation}
From the previous lemma we have a local expression for a horizontal vector $\tilde{X}$. Moreover, $\tilde{X} \in HP \Rightarrow \tiloml (\tilde{X})=0$.
\begin{KZeq}
The KZ equation:
\beq
dg=\omega\cdot g
\eeq
follows from the fact that $\tiloml (\tilde{X})=0$.
\end{KZeq}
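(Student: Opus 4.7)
The plan is to substitute the expression for $\tilde{X}$ given by the preceding corollary directly into $\tiloml$ and collect terms. From the corollary I have
\beq
\tilde{X} = \sigma_{*}X \cdot g(t) + \bigl(g^{-1}(t)\cdot dg(X)\bigr)^{\star},
\eeq
and by construction $\tiloml = -g^{-1}\pi^{*}\omega g + g^{-1}d_{P}g$. I would evaluate $\tiloml$ on each summand of $\tilde{X}$ separately.

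First, on the summand $\sigma_{*}X\cdot g(t) = R_{g(t)*}\sigma_{*}X$: the identities $\pi\circ R_g = \pi$ and $\pi\circ\sigma = \mathrm{id}_{X_N}$ give $\pi_{*}R_{g*}\sigma_{*}X = X$, so $\pi^{*}\omega$ evaluated there equals $\omega(X)$. Meanwhile $d_{P}g$ annihilates this vector because along the curve $s \mapsto \sigma(\gamma(s))\cdot g$ with $g$ held fixed the $G$-coordinate function is constant. Hence the first summand contributes $-g^{-1}\omega(X)g$. Second, on the vertical summand $(g^{-1}dg(X))^{\star}$, axiom (1) for a connection one-form gives $\tiloml((g^{-1}dg(X))^{\star}) = g^{-1}dg(X)$ directly.

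Setting $\tiloml(\tilde{X}) = 0$ and combining the two contributions yields
\beq
g^{-1}dg(X) = g^{-1}\omega(X)g.
\eeq
Invoking commutativity of the group $G = \exp\mathcal{A}^{(1)}(X_N)$ the conjugation on the right collapses to $\omega(X)$, and multiplying by $g$ on the left gives $dg(X) = \omega(X)\cdot g$. Since $X \in T_{\gamma(t)}X_N$ was arbitrary, the identity $dg = \omega\cdot g$ of 1-forms along the curve follows.

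The main obstacle is bookkeeping rather than mathematics: the symbol $g$ plays a dual role as a coordinate function $g:\pi^{-1}U \to G$ used to define $\tiloml$ and as the curve $t \mapsto g(t) \in G$ parameterizing the lift $\tilde{\gamma}$ that appears in the corollary. The key observation is that on the horizontal representative $R_{g*}\sigma_{*}X$ the coordinate $g$ is constant, so $d_{P}g$ vanishes there, whereas on the vertical compensating vector the connection axiom reproduces precisely the curve-derivative $dg(X)$. Once this is disentangled the derivation is purely formal, with commutativity of $G$ providing the final simplification.
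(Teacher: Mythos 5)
Your proposal is correct and follows essentially the same route as the paper: both substitute the horizontal-lift decomposition $\tilde{X}=\sigma_{*}X\cdot g+(g^{-1}dg(X))^{\star}$ into $\tiloml(\tilde{X})=0$, extract $g^{-1}dg(X)=g^{-1}\omega(X)g$, and use commutativity of $G$ to conclude $dg=\omega\cdot g$. The only divergence is that you evaluate the first summand by unwinding the definition $\tiloml=-g^{-1}\pi^{*}\omega g+g^{-1}d_{P}g$ directly, whereas the paper instead invokes the previously established equivariance $R_{g}^{*}\tiloml=Ad_{g^{-1}}\tiloml$ together with $\sigma^{*}\tiloml=-\omega$; the two are interchangeable since both facts were derived from that same definition.
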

\begin{proof}
We have from the previous lemma:
\beq
\tilde{X}= \sigma_{*} X\cdot g (t) + ( g ^{-1} (t) dg (X))^{\star}
\eeq
and upon applying $\tiloml$ on both sides, we get:
\begin{align}
0&=\tiloml ( \sigma_* X \cdot g(t)) + \tiloml (( g ^{-1} (t) dg (X))^{\star}) \\
\nonumber \\
&=\tiloml ( R_{g(t)*} \sigma_* X ) +  g ^{-1} (t) dg (X) \\
\nonumber \\
&= R_{g(t)}^*\tiloml (\sigma_* X) + g ^{-1} (t) dg (X)\\
\nonumber \\
&=Ad_{g^{-1} (t)} \tiloml (\sigma_* X) + g ^{-1} (t) dg (X)\\
\nonumber \\
&=g^{-1} (t) \tiloml ( \sigma_* X)g(t) + g^{-1} (t) dg (X)
\end{align}
from which we get:
\beq
-g^{-1} (t) \sigma^* \tiloml (X) g(t)=g ^{-1} (t) dg (X)
\eeq
and using the fact that $\sigma^* \tiloml=-\omega$, we get:
\beq
-g^{-1}(t) (-\omega (X))g(t)=g^{-1}(t)dg (X)
\eeq
and by multiplying on each side by $g(t)$ we get:
\beq
dg(X)=\omega(X)g
\eeq
and this being true for all $X \in T X_N$, it follows that we have the KZ equation:
\beq
dg=\omega g
\eeq
\end{proof}

\subsection{Connection on $\barAXN$ induced from $\nabla$ on $P$}
We define an action of $G$ on $P \times \mathbb{C}[[\{|Q\rangle \;|\;|Q|=1\}]]$ as follows: for $u \in P$, $\xi \in \mathbb{C}[[\{|Q\rangle \;|\;|Q|=1\}]]$ and $g \in G$, then:
\beq
g\cdot(u,\xi)=(ug, g^{-1} \xi)
\eeq
We regard the bundle $\barAXN$ as $E:=P \times \mathbb{C}[[\{|Q\rangle \;|\;|Q|=1\}]]/G$ over $X_N$ with structure group $G$. The projection $\pi_E: E \rightarrow X_N$ is defined by the projection $\pi$ on the first factor, and local trivializations are given by $\Phi: U \times \mathbb{C}[[\{|Q\rangle \;|\;|Q|=1\}]] \rightarrow \pi_E ^{-1}(U)$ for $U$ open in $X_N$. For $U$ open in $X_N$, $\sigma$ a section of $P$ over $U$, $\gamma$ a curve in $U$, $\tilde{\gamma}$ a horizontal lift of $\gamma$, $\gamma(0)=Z \in X_N$, $\tilde{\gamma}(0)=u_0$, $X \in T_Z X_N$, $s$ a section of $E=\barAXN$ over $U$, then we can write:
\beq
s(\gamma t)=[(\tilde{\gamma}(t),\xi (\gamma t))]
\eeq
for some $\eta: X_N \rightarrow \mathbb{C}[[\{|P\rangle / |P|=1\}]]$. The covariant derivative of $s(t)$ along $\gamma(t)$ at $\gamma (0)=Z$ is defined by:
\beq
\nabla_X s =\left[ (\tilde{\gamma}(0),\frac{d}{dt}\xi (\gamma (t))|_{t=0}) \right]
\eeq
For $\sigma$ fixed, we can write $\tilde{\gamma}(t)=\sigma (\gamma t) g (\gamma t)$. For $|P|=m$, $P=(P_1,\cdots , P_m)$, $|P\rangle = |P_1\rangle \cdot ... \cdot |P_m\rangle $ is the $P$-th basis vector of $\mathbb{C}[[\{|P\rangle \;|\;|P|=1\}]]$. Consider the following section of $E$:
\beq
e_P (Z)=[ (\sigma(Z),|P\rangle )]
\eeq
so that:
\beq
e_P (\gamma t)=[(\tilde{\gamma}(t)\cdot g^{-1}(t),|P\rangle )]=[(\tilde{\gamma}(t),g^{-1}(t)\cdot |P\rangle)]
\eeq
Then:
\begin{align}
\nabla_X e_P&=[(\tilde{\gamma}(0),\frac{d}{dt}(g^{-1}(t)\cdot |P\rangle )|_{t=0})] \\
& \nonumber \\
&=[(\tilde{\gamma}(0),-g^{-1}(t)\frac{d}{dt}g(t)g^{-1}(t)\cdot |P\rangle |_{t=0})] \\
& \nonumber \\
&=[(\tilde{\gamma}(0),-g^{-1}(0)dg(X)g^{-1}(0)\cdot |P\rangle )] \\
& \nonumber \\
&=[(\tilde{\gamma}(0),-g^{-1}(0)\omega (X)g(0)g^{-1}(0)\cdot |P\rangle )] \\
& \nonumber \\
&=[(\tilde{\gamma}(0),-g^{-1}(0)\omega(X)\cdot |P\rangle )] \\
& \nonumber \\
&= [(\tilde{\gamma}(0)g^{-1}(0),-\omega(X)\cdot |P\rangle )] \\
& \nonumber \\
&=[(\sigma (Z), -\omega (X) \cdot |P\rangle)]
\end{align}
that is $\nabla e_P = [(\sigma (Z), -\omega  |P\rangle )]$.
More generally, for a section $s(Z)=[(\sigma (Z), \xi (Z))]$ of $E$ with $\xi =\sum_P \lambda_P |P\rangle$, then using essentially the same computational techniques, and letting $\xi(t)=g(t)\eta ( \gamma t)$, we would find:
\begin{align}
\nabla_X s&= [(\tilde{\gamma}(0), \frac{d}{dt}(g^{-1}(t) \xi (t))|_{t=0})] \\
&=[(\tilde{\gamma}(0), -g^{-1}(0)\omega (X) \xi (0) + g^{-1}(0)\frac{d}{dt} \xi (t)|_{t=0})] \\
&=[( \tilde{\gamma}(0)g^{-1}(0), \frac{d}{dt}\xi(t)|_{t=0}-\omega (X) \xi (Z))] \\
&=[( \sigma (Z), d\xi(X) -\omega(X) \xi (Z))]
\end{align}
and in a more compact notation $\nabla s =[(\sigma (Z), (d -\omega )\xi (Z))]$. When we considered elements $\psi(Z)=\sum_P \lambda_P (Z) |P\rangle $ of $\barA(Z)$ we really meant $\sum_P \lambda_P (Z) e_P (Z)$ and:
\begin{align}
\nabla s&= [(\sigma , (d-\omega) \sum_P \lambda_P |P\rangle )] \\
&=\sum_P (d-\omega)\lambda_P e_P \\
&=(d-\omega)\sum_P \lambda_P e_P \\
&:=\nabla_{\barAXN} \psi
\end{align}
which is the KZ connection on $\barAXN$ as initially introduced.

\end{document}